\numberwithin{equation}{section} 
\theoremstyle{plain}
\newtheorem{theo+}           {Theorem}      [section]
\newtheorem{prop+}  [theo+]  {Proposition}
\newtheorem{coro+}  [theo+]  {Corollary}
\newtheorem{lemm+}  [theo+]  {Lemma}
\newtheorem{defi+}  [theo+]  {Definition}
\newtheorem{conj+}  [theo+]  {Conjecture}
\theoremstyle{definition}
\newtheorem{rema+}  [theo+]  {Remark}
\newtheorem{prob+}  [theo+]  {Problem}
\newtheorem{exam+}  [theo+]  {Example}
\newenvironment{theorem}{\begin{theo+}}{\end{theo+}}
\newenvironment{proposition}{\begin{prop+}}{\end{prop+}}
\newenvironment{lemma}{\begin{lemm+}}{\end{lemm+}}
\def\om{\omega}
\begin{document}

\baselineskip 18pt
\larger[2]
\title
[Determinant evaluations related to plane partitions] 
{New proofs of determinant evaluations\\ related to plane partitions}
\author{Hjalmar Rosengren}
\address
{Department of Mathematical Sciences
\\ Chalmers University of Technology\\SE-412~96 G\"oteborg, Sweden}
\email{hjalmar@chalmers.se}
\urladdr{http://www.math.chalmers.se/{\textasciitilde}hjalmar}
 \keywords{Determinant evaluation,  plane partition, Wilson polynomial, Continuous dual Hahn polynomial, Meixner--Pollaczek polynomial, Askey--Wilson polynomial, Pastro polynomial}
\subjclass[2000]{05A17, 05E35, 15A15, 33C45, 33D45}

\thanks{Research  supported by the Swedish Science Research
Council (Vetenskapsr\aa det)}

\begin{abstract}
We give a new proof of a determinant evaluation due to Andrews, which has been used to enumerate cyclically symmetric and descending plane partitions. We also prove some related results, including a $q$-analogue of Andrews's determinant.
\end{abstract}

\maketitle

\section{Introduction}  

In 1979, George Andrews \cite{a} managed to evaluate the determinant
\begin{equation}\label{ad}\det_{0\leq m,n\leq N-1}\left(\delta_{mn}+\binom{x+m+n}{n}\right). \end{equation}
This allowed him to enumerate so called 
cyclically symmetric plane partitions (using the case $x=0$) and descending plane partitions ($x=2$). Andrews's proof, which takes up most of his 33 pages paper, amounts to partially working out the LU-factorization of the underlying matrix. This  requires both clever guess-work and creative use of hypergeometric series identities. Later, 
 Andrews and Stanton \cite{as} found a shorter proof, using what Krattenthaler \cite{kr} has called  ``a magnificient factorization theorem'' due to 
 Mills, Robbins and Rumsey \cite{mrr}. 
This proof can be simplified further, see \cite{ck,kr0,pw}.
It is our purpose to present a new and simple method for evaluating \eqref{ad}, using orthogonal polynomials. 
Roughly speaking, we compute \eqref{ad} by viewing each matrix element as the scalar product of two Meixner--Pollaczek polynomials, with respect to the orthogonality measure for certain Wilson polynomials, see \S \ref{cs}.

Our method can be used to prove further results. 
Ciucu, Eisenk\"olbl, Krattenthaler and Zare \cite{cekz} found that
\begin{equation}\label{cd}\det_{0\leq m,n\leq N-1}\left(\delta_{mn}+t \binom{x+m+n}{n}\right)\end{equation}
can be evaluated in closed form when $t^6=1$. Up to conjugation, this gives four cases, $t=\pm 1$ and $t=\pm e^{2\textup i\pi/3}$, the case $t=1$ being \eqref{ad}.
Our proof of \eqref{ad} can be modified to include the remaining three cases, see \S \ref{cvs}.

We will also obtain some new variations of \eqref{ad}.
Note that evaluating \eqref{cd} is equivalent to evaluating
\begin{equation}\label{ada}\det_{0\leq m,n\leq N-1}\left({m!}{(b)_m}\delta_{mn}+
t(b)_m(b)_n\,{}_2F_1\left(\begin{matrix}-m,-n\\b \end{matrix};1\right)\right).
\end{equation}
Indeed, by the Chu--Vandermonde summation, the ${}_2F_1$ equals $(b)_{m+n}/(b)_m(b)_n$.
Dividing the $n$th column by $n!(b)_n$ then gives \eqref{cd}, with  $x=b-1$.
Using continuous dual Hahn polynomials rather than Wilson polynomials, 
we will evaluate
\begin{equation}\label{bda}\det_{0\leq m,n\leq N-1}\left({m!}{(b)_m}\delta_{mn}+t2^{(m+n)/2}
(b)_m(b)_n\,{}_2F_1\left(\begin{matrix}-m,-n\\b \end{matrix};\frac 12\right)\right) \end{equation}
whenever $t^4=1$ (giving three non-equivalent cases, $t=\pm1$ and $t=\textup i$)
and 
\begin{equation}\label{td}\det_{0\leq m,n\leq N-1}\left({m!}{(b)_m}\delta_{mn}+t
3^{(m+n)/2}(b)_m(b)_n\,{}_2F_1\left(\begin{matrix}-m,-n\\b \end{matrix};\frac 13\right)\right) \end{equation}
whenever $t^3=-1$ (giving two non-equivalent cases,  $t=-1,\,e^{\textup i\pi/3}$), see \S \ref{nvs}.
These results have some relation to weighted enumeration of alternating sign matrixes. Indeed, as we explain further below, the case $b=1$, $t=e^{2\textup i\pi/3}$ of \eqref{ada} is related to the famous problem of enumerating alternating sign matrices of fixed size. Similarly, it follows from the work of Colomo and Pronko \cite{cp1} that the case 
$b=1$, $t=\textup i$ of \eqref{bda} is related to the $2$-enumeration of alternating sign matrices and the case $b=1$, $t=e^{ i\pi/3}$ of 
\eqref{td} to the $3$-enumeration.

Another problem, already discussed in \eqref{ad}, is to obtain a $q$-analogue of Andrews's determinant. In the combinatorially most interesting special cases, $x=0$ and $x=2$, such $q$-analogues were proved by Mills, Robbins and Rumsey \cite{mrr2}, thereby settling conjectures of Macdonald \cite{m} and Andrews \cite{a}.
However, until now nobody has found a $q$-analogue for the case of general $x$. We propose such an identity in Theorem \ref{qt} where, roughly speaking,  the summable ${}_2F_1$ in \eqref{ada} is replaced by a non-summable ${}_4\phi_3$. However,  our Theorem \ref{qt} does not contain the $q$-analogues found by Mills, Robbins and Rumsey. 
It would be interesting to prove those results using the method of the present work. Some other identities that one should look at are the  conjectured determinant evaluations given in \cite[Conj.\ 35--37]{kr2}.
For instance, the first of these, due to Guoce Xin, amounts to evaluating
$$\det_{0\leq m,n\leq N-1}\left(\delta_{mn}-\binom{x+m+n}{n+1}\right).$$

We would like to acknowledge that our main idea is contained
 in the work of Colomo and Pronko \cite{cp1,cp} on the six-vertex model.
In \cite{cp1}, these authors found a new determinant formula 
for
the partition function of the homogeneous six-vertex model with domain wall boundary conditions.  
At the ``ice point'', the Colomo--Pronko formula expresses
 the number of states 
of the model (on an $N\times N$ lattice) as
 \begin{equation}\label{zd}\det_{0\leq m,n\leq N-1}\left(-e^{\textup 2i\pi/3}\delta_{mn}+e^{\textup i\pi/3}\binom{m+n}{n}\right), \end{equation}
which is essentially  the  case $x=0,\, t=e^{2\textup i\pi/3}$ of \eqref{cd}.
 On the other hand, by the alternating sign matrix theorem  
\cite{ku,z}, the number of states is
\begin{equation}\label{an}\frac {1!4!7!\dotsm(3N-2)!}{N!(N+1)!(N+2)!\dotsm(2N-1)!}. \end{equation}
If we want to prove directly that \eqref{zd} equals \eqref{an} we can proceed as follows.
Let
$$\langle f,g\rangle_{\pm}=\operatorname{PV}\int_{-\infty}^\infty f(x)g(x)\,\frac{e^{\pm \pi x/6}}{\sinh(\pi x/2)}\,dx$$
and
$$\langle f,g\rangle=\langle f,g\rangle_{+}+\langle f,g\rangle_{-}=2\int_{-\infty}^\infty f(x)g(x)\,\frac{\sinh(\pi x/6)}{\sinh(\pi x/2)}\,dx. $$
Consider the determinant
$$D=\det_{0\leq m,n\leq N-1}\left(\langle p_m,p_n\rangle\right),$$
where $p_n$ is a monic polynomial of degree $n$.
By linearity in rows and columns, $D$ does not depend on the choice of $p_n$. 
Choosing $p_n$ as 
orthogonal with respect to the pairing $\langle\cdot,\cdot\rangle_+$, $D$ essentially reduces to \eqref{zd} \cite{cp1}. On the other hand,
choosing $p_n$ as orthogonal  with respect to  $\langle\cdot,\cdot\rangle$,  $D$ becomes diagonal 
and can thus be evaluated \cite{cp}. (Choosing $p_n(x)=x^n$ gives a Hankel determinant, which is the limit case of the Izergin--Korepin formula \cite{ick} used by Kuperberg \cite{ku} in his proof of \eqref{an}.)
 All our results are obtained by variations of this idea.

\section{Preliminaries on orthogonal polynomials}

For the benefit of the reader, we collect
 some fundamental facts on Wilson, continuous dual Hahn, Meixner--Pollaczek and Askey--Wilson polynomials, see  \cite{ks}. 
We refer to \cite{aar} or \cite{gr} for the standard  notation
for hypergeometric and basic hypergeometric series
 used throughout the paper.

The \emph{Wilson polynomials} are defined by
\begin{multline}\label{wp}W_n\left(x^2;a_1,a_2,a_3,a_4\right)=(a_1+a_2)_n(a_1+a_3)_n(a_1+a_4)_n\\
\times\,{}_4F_3\left(\begin{matrix}-n,a_1+a_2+a_3+a_4+n-1,a_1+\textup ix,a_1-\textup ix\\a_1+a_2,a_1+a_3,a_1+a_4\end{matrix};1\right). 
\end{multline}
This is a polynomial of degree $n$ in $x^2$ with leading coefficient
$$(-1)^n(a_1+a_2+a_3+a_4+n-1)_n. $$
If the parameters $a_k$ are all positive, Wilson polynomials
satisfy the orthogonality relation
\begin{multline}\label{wo}
\frac{\Gamma(a_1+a_2+a_3+a_4)}{2\pi\prod_{1\leq j<k\leq 4}\Gamma(a_j+a_k)}
  \int_0^\infty\left|\frac{\Gamma(a_1+\textup ix)\Gamma(a_2+\textup ix)\Gamma(a_3+\textup ix)\Gamma(a_4+\textup ix)}{\Gamma(2ix)}\right|^2\\
\times W_m\left(x^2;a_1,a_2,a_3,a_4\right)W_n\left(x^2;a_1,a_2,a_3,a_4\right)\,dx=h_n\,\delta_{mn},
 \end{multline}
where
$$h_n=h_n^\text{W}(a_1,a_2,a_3,a_4)=\frac{a_1+a_2+a_3+a_4-1}{a_1+a_2+a_3+a_4-1+2n}\frac{n!\prod_{1\leq j<k\leq 4}(a_j+a_k)_n}{(a_1+a_2+a_3+a_4-1)_n}. $$
Later, we will choose $a_1=0$. Then, the pole of the factor $\Gamma(a_1+ix)$ at $x=0$ is cancelled by the pole of $\Gamma(2ix)$. Thus, \eqref{wo} remains valid for $a_1=0$ as long as the other parameters are positive.

The \emph{continuous dual Hahn polynomials} are defined by
$$S_n\left(x^2;a_1,a_2,a_3\right)=(a_1+a_2)_n(a_1+a_3)_n\,{}_3F_2\left(\begin{matrix}-n,a_1+\textup ix,a_1-\textup ix\\a_1+a_2,a_1+a_3\end{matrix};1\right). $$
This is a polynomial of degree $n$ in $x^2$ with leading coefficient
$(-1)^n$.
If all $a_k$ are  positive, then
\begin{multline}\label{cho}
\frac{1}{2\pi\prod_{1\leq j<k\leq 3}\Gamma(a_j+a_k)}
  \int_0^\infty\left|\frac{\Gamma(a_1+\textup ix)\Gamma(a_2+\textup ix)\Gamma(a_3+\textup ix)}{\Gamma(2ix)}\right|^2\\
\times S_m\left(x^2;a_1,a_2,a_3\right)S_n\left(x^2;a_1,a_2,a_3\right)\,dx=h_n\,\delta_{mn},
 \end{multline}
where
$$h_n=h_n^\text{CDH}(a_1,a_2,a_3)=n!(a_1+a_2)_n(a_1+a_3)_n(a_2+a_3)_n. $$
Similarly as for \eqref{wo}, \eqref{cho} holds also for $a_1=0$ as long as the other parameters are positive.

The \emph{Meixner--Pollaczek polynomials} are defined by
\begin{equation}\label{mp}P_n^{(\lambda)}(x;\phi)=\frac{(2\lambda)_n}{n!}\,e^{\textup in\phi}\,{}_2F_1\left(\begin{matrix}-n,\lambda+\textup ix\\2\lambda\end{matrix};1-e^{-2\textup i\phi}\right).\end{equation}
This is a polynomial in $x$ of degree $n$ with leading coefficient
$$\frac{(2\sin\phi)^n}{n!}. $$
For $\lambda>0$ and $0<\phi<\pi$,
\begin{equation}\label{mpo}\frac{(2\sin\phi)^{2\lambda}}{2\pi\Gamma(2\lambda)}\int_{-\infty}^\infty
e^{(2\phi-\pi)x}\left|\Gamma(\lambda+\textup ix)\right|^2P_m^{(\lambda)}(x;\phi)P_n^{(\lambda)}(x;\phi)\,dx=h_n\delta_{mn}, \end{equation}
where
$$h_n=h_n^{\text{MP}}(\lambda)=\frac{(2\lambda)_n}{n!}. $$
We will need the expansion formula
\begin{equation}\label{me}
P_n^{(\lambda)}\left(x;\frac\pi 2+\phi\right)
=(-1)^n\frac{(2\lambda)_n}{n!}\sum_{k=0}^n\frac{(-n)_k}{(2\lambda)_k}\,(2\sin\phi)^{n-k}P_k^{(\lambda)}\left(x;\frac\pi 2-\phi\right),
\end{equation}
which can be proved by inserting \eqref{mp}, changing the order 
of summation and using the binomial theorem.

Finally, the \emph{Askey--Wilson polynomials}  are defined by
\begin{align*}p_n(\cos\theta;a_1,a_2,a_3,a_4|q)&=\frac{(a_1a_2,a_1a_3,a_1a_4;q)_n}{a_1^n}\\
&\quad\times\,{}_4\phi_3\left(\begin{matrix}q^{-n},a_1a_2a_3a_4q^{n-1},a_1e^{\textup i\theta},a_1e^{-\textup i\theta}\\a_1a_2,a_1a_3,a_1a_4\end{matrix};q,q\right).
\end{align*}
This is a polynomial in $\cos\theta$ of degree $n$ with leading coefficient
$$2^n(a_1a_2a_3a_4q^{n-1};q)_n. $$ 
It will be convenient to write the orthogonality 
 using $e^{\textup i\theta}$ rather than $\cos\theta$ as  integration variable.
Assuming 
\begin{equation}\label{apc}|q|,|a_1|,|a_2|,|a_3|,|a_4|<1,\end{equation}
 we have 
\begin{multline}\label{awo}\frac{(q;q)_\infty\prod_{1\leq j<k\leq 4}(a_ja_k;q)_\infty}{2(a_1a_2a_3a_4;q)_\infty}
\oint\frac{(z^2,z^{-2};q)_\infty}{(a_1z,a_1z^{-1},a_2z,a_2z^{-1},a_3z,a_3z^{-1},a_4z,a_4z^{-1};q)_\infty}\\
\times p_m\left(\frac{z+z^{-1}}2;a_1,a_2,a_3,a_4|q\right)p_n\left(\frac{z+z^{-1}}2;a_1,a_2,a_3,a_4|q\right)\,\frac{dz}{2\pi\textup iz}=h_n\delta_{mn}, \end{multline}
where the integral is over the positively oriented unit circle and
$$h_n=h_n^{\text{AW}}(a_1,a_2,a_3,a_4;q)=\frac{1-a_1a_2a_3a_4q^{-1}}{1-a_1a_2a_3a_4q^{2n-1}}\frac{(q;q)_n\prod_{1\leq j<k\leq 4}(a_ja_k;q)_n}{(a_1a_2a_3a_4q^{-1};q)_n}. $$
We will need the fact that \eqref{awo} remains valid when $a_1=1$, 
as long as the other conditions in \eqref{apc} hold.
The reason  is that
 the double zero of the factor $(a_1z,a_1z^{-1};q)_\infty$  at $z=1$ is cancelled by the double zero of $(z^2,z^{-2};q)_\infty$. 

\section{Proof of Andrews's determinant}
\label{cs}

We first explain the main idea behind our proof in general terms.
Suppose  we are given three symmetric bilinear forms
$\langle\cdot,\cdot\rangle_k$, $k=-1,0,1$,  which are defined on polynomials and related by
\begin{equation}\label{sps}\langle f,g\rangle_0=\langle f,g\rangle_1+\langle f,g\rangle_{-1}. \end{equation}
In the generic situation, there exist monic  polynomials
 $p_n^{(k)}$ of degree $n$, with $\langle p_m^{(k)},p_n^{(k)}\rangle_k=h_n^{(k)}\delta_{mn}$. We assume that this is the case for $k=0$ and $k=1$.

Consider the determinant
\begin{equation}\label{gd}D=\det_{0\leq m,n\leq N-1}\left(\langle p_m,p_{n}\rangle_0\right),  \end{equation}
with $p_n$  a monic polynomial of degree $n$. By linearity in rows and columns, $D$ is
 independent of the choice of $p_n$. In particular, choosing
 $p_n=p_n^{(0)}$ we find that $D=h_0^{(0)}h_1^{(0)}\dotsm h_{N-1}^{(0)}$. Choosing $p_n=p_n^{(1)}$ then  gives the key identity
\begin{equation}\label{gfd}\det_{0\leq m,n\leq N-1}\left(h_{m}^{(1)}\delta_{mn}+\langle p_{m}^{(1)},p_{n}^{(1)}\rangle_{-1}\right)=\prod_{n=0}^{N-1}h_{n}^{(0)}. \end{equation}

In the cases that we will consider,  the bilinear forms will be defined
by 
$$\langle f,g\rangle_{k}=\int_{-\infty}^\infty f(x)g(x)w_{k}(x)\,dx,\qquad k=-1,0,1,$$
where $w_0=w_1+w_{-1}$. In particular, we will show that if we take
\begin{subequations}\label{w}
\begin{align}\label{wpm}w_{\pm 1}(x)&=\frac{3^{(b+2)/2}}{4\pi\Gamma(b)}\, e^{\pm\pi x}\left|\Gamma\left(\frac b2+3\textup ix\right)\right|^2, \\
\label{wn}w_0(x)&=\frac{3^{(b+2)/2}}{2\pi\Gamma(b)}\, \cosh(\pi x)\left|\Gamma\left(\frac b2+3\textup ix\right)\right|^2, \end{align}
\end{subequations}
where $b>0$, then 
  \eqref{gfd} becomes Andrews's determinant evaluation \eqref{ad}, with $x=b-1$.

Let us first compute the polynomials $p_n^{(0)}$. Since  $w_0$ is even, we can write
$p_{2n}^{(0)}(x)=q_n(x^2)$, $p_{2n+1}^{(0)}(x)=x\,r_n(x^2)$, where 
$q_n$ and $r_n$ are  monic orthogonal polynomials on the positive half-line with weight $w_0$ and $x^2w_0$, respectively.

Recall that the gamma function satisfies the duplication formula
$$(2\pi)^{1/2}\Gamma(2x)=2^{2x-1/2}\Gamma(x)\Gamma\left(x+\frac 12\right), $$
the triplication formula
$$2\pi\Gamma(3x)=3^{3x-1/2}\Gamma(x)\Gamma\left(x+\frac 13\right)\Gamma\left(x+\frac 23\right) $$
and the reflection formula, which we write as
$$ \Gamma\left(\frac 12+\textup i x\right)\Gamma\left(\frac 12-\textup i x\right)=\frac{\pi}{\cosh(\pi x)}.$$
Combining these identities, one readily writes
\begin{align*}w_0(x)&=\frac{3^{3b/2}}{32\pi^3\Gamma(b)}\left|\frac{\Gamma(\textup ix)\Gamma(\textup ix+b/6)\Gamma(\textup ix+b/6+1/3)\Gamma(\textup ix+b/6+2/3)}{\Gamma(2\textup ix)}\right|^2\\
&=\frac{\Gamma(a_1+a_2+a_3+a_4)}{4\pi\prod_{1\leq j<k\leq 4}\Gamma(a_j+a_k)}\left|\frac{\Gamma(a_1+\textup ix)\Gamma(a_2+\textup ix)\Gamma(a_3+\textup ix)\Gamma(a_4+\textup ix)}{\Gamma(2\textup ix)}\right|^2,
 \end{align*}
with
$(a_1,a_2,a_3,a_4)=(0,b/6,b/6+1/3,b/6+2/3)$. Comparing this with \eqref{wo}, we find that 
\begin{subequations}\label{pnn}
\begin{equation}p_{2n}^{(0)}(x)=\frac{(-1)^n}{(b/2+n)_n}\,W_n\left(x^2;0,\frac b6,\frac b6+\frac 13,\frac b6+\frac 23\right) \end{equation}
and that
\begin{align}\notag h_{2n}^{(0)}&=\frac 1{(b/2+n)_n^2}\,h_n^\text W\left(0,\frac b6,\frac b6+\frac 13,\frac b6+\frac 23\right)\\
\label{pen}&=\frac{n!(b/2)_n(b/2)_{3n}(b+1)_{3n}}{3^{6n}(b/2)_{2n}(b/2+1)_{2n}}. \end{align}

Since $x^2|\Gamma(ix)|^2=|\Gamma(ix+1)|^2$, we can also write
$$x^2w_0(x)=C\frac{\Gamma(b_1+b_2+b_3+b_4)}{4\pi\prod_{1\leq j<k\leq 4}\Gamma(b_j+b_k)}\left|\frac{\Gamma(b_1+\textup ix)\Gamma(b_2+\textup ix)\Gamma(b_3+\textup ix)\Gamma(b_4+\textup ix)}{\Gamma(2\textup ix)}\right|^2, $$
with
$(b_1,b_2,b_3,b_4)=(1,b/6,b/6+1/3,b/6+2/3)$ and
$$C=\frac{b_2b_3b_4}{b_2+b_3+b_4}=\frac{b(b+4)}{2^2\cdot 3^3}. $$
It follows that
\begin{equation}p_{2n+1}^{(0)}(x)=\frac{(-1)^n}{(b/2+n+1)_n}\,x\,W_n
\left(x^2;1,\frac b6,\frac b6+\frac 13,\frac b6+\frac 23\right)
 \end{equation}
and 
\begin{align}\notag h_{2n+1}^{(0)}&=\frac{b(b+4)}{2^2\cdot 3^3(b/2+n+1)_n^2}\,h_n^\text W\left(1,\frac b6,\frac b6+\frac 13,\frac b6+\frac 23\right)\\
\label{pon}&=\frac{b(b+4)}{4}\frac{n!(b/2+1)_n(b/2+3)_{3n}(b+1)_{3n}}{ 3^{6n+3}(b/2+1)_{2n}(b/2+2)_{2n}}. \end{align}
\end{subequations}

As for the polynomials $p_n^{(\pm 1)}$, it follows from \eqref{mpo} that 
$$p_n^{(\pm 1)}(x)=\frac{n!}{3^{3n/2}}\,P_n^{(b/2)}\left(3x,\frac{\pi}2\pm \frac\pi 6\right)$$
and that
\begin{equation}\label{pn}h_n^{(\pm 1)}=\langle p_m^{(\pm 1)},p_n^{(\pm 1)}\rangle_{\pm 1}=\frac{(n!)^2}{2\cdot 3^{3n}}\,h_n^{\text{MP}}(b/2)=\frac{n!\,(b)_n}{2\cdot 3^{3n}}. \end{equation}
To compute $\langle p_m^{(1)},p_n^{(1)}\rangle_{-1}$, we use 
  \eqref{me} to expand
$$p_n^{(1)}=(-1)^n(b)_n\sum_{k=0}^n\frac{(-n)_k}{k!(b)_k}\,3^{3(k-n)/2}p_k^{(-1)}. $$
It follows that  
\begin{equation}\label{pp}\langle p_m^{(1)},p_n^{(1)}\rangle_{-1}=\frac{(-1)^{m+n}(b)_m(b)_n}{2\cdot 3^{3(m+n)/2}}\sum_{k=0}^{\min(m,n)}\frac{(-m)_k(-n)_k}{k!(b)_k}
=\frac{(-1)^{m+n}(b)_{m+n}}{2\cdot 3^{3(m+n)/2}}, \end{equation}
where the final step is the Chu--Vandermonde summation.

By \eqref{pn} and \eqref{pp}, the general determinant identity \eqref{gfd}
is now reduced to
$$
\det_{0\leq m,n\leq N-1}\left(\frac{m!\,(b)_m}{2\cdot3^{3m}}\,\delta_{mn}+(-1)^{m+n}\frac{(b)_{m+n}}{2\cdot 3^{3(m+n)/2}}\right)
=\prod_{n=0}^{[(N-1)/2]}h_{2n}^{(0)}\prod_{n=0}^{[(N-2)/2]}h_{2n+1}^{(0)},
$$
with $h_n^{(0)}$  as in \eqref{pnn}.
Multiplying the $n$th row and $n$th column with
$(-1)^n2^{1/2} 3^{3n/2}$, for each $n$, we arrive at the following result.

\begin{theorem}[Andrews]\label{at} The following determinant evaluation holds: 
\begin{multline*}\det_{0\leq m,n\leq N-1}\big({m!} \,{(b)_m}\delta_{mn}+{(b)_{m+n}}\big)=2^N\left(\frac{b(b+4)}{4}\right)^{\left[\frac N2\right]}\\
\times\prod_{n=0}^{[(N-1)/2]}\frac{n!(b/2)_n(b/2)_{3n}(b+1)_{3n}}{(b/2)_{2n}(b/2+1)_{2n}}\prod_{n=0}^{[(N-2)/2]}\frac{n!(b/2+1)_n(b/2+3)_{3n}(b+1)_{3n}}{(b/2+1)_{2n}(b/2+2)_{2n}}.
\end{multline*}
\end{theorem}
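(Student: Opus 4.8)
The plan is to specialize the general three-form identity \eqref{gfd} to the weights of \eqref{w}. Everything rests on the decomposition $w_0 = w_1 + w_{-1}$, engineered so that the $w_0$-orthogonal monic polynomials $p_n^{(0)}$ are rescaled Wilson polynomials while the $w_{\pm1}$-orthogonal ones $p_n^{(\pm1)}$ are rescaled Meixner--Pollaczek polynomials. Once these three families and their squared norms are in hand, the left side of \eqref{gfd} is the determinant of the matrix with diagonal $h_m^{(1)}$ and entries $\langle p_m^{(1)}, p_n^{(1)}\rangle_{-1}$, while the right side is $\prod_n h_n^{(0)}$; the theorem then drops out after a diagonal rescaling.

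First I would compute the $w_0$-side data. As $w_0$ is even, I split into $p_{2n}^{(0)}(x) = q_n(x^2)$ and $p_{2n+1}^{(0)}(x) = x\,r_n(x^2)$, with $q_n, r_n$ monic and orthogonal on the half-line for $w_0$ and $x^2 w_0$. The decisive manipulation is to rewrite $w_0$, via the gamma duplication, triplication and reflection formulas, as a Wilson weight \eqref{wo} with parameters $(0, b/6, b/6+1/3, b/6+2/3)$, the value $0$ being admissible by the pole-cancellation remark after \eqref{wo}; the identity $x^2|\Gamma(ix)|^2 = |\Gamma(1+ix)|^2$ then recasts $x^2 w_0$ as a Wilson weight with first parameter $1$. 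Reading off leading coefficients and the explicit $h_n^{\mathrm W}$ yields $h_{2n}^{(0)}$ and $h_{2n+1}^{(0)}$ in the closed form \eqref{pnn}.

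Next comes the $w_{\pm1}$-side. The $p_n^{(\pm1)}$ are rescaled Meixner--Pollaczek polynomials $P_n^{(b/2)}(3x; \tfrac\pi2 \pm \tfrac\pi6)$, so \eqref{mpo} gives $h_n^{(1)}$ immediately. The heart of the argument is the cross inner product $\langle p_m^{(1)}, p_n^{(1)}\rangle_{-1}$: using the connection formula \eqref{me} I expand $p_n^{(1)}$ in the basis $\{p_k^{(-1)}\}$, whereupon orthogonality of the $p_k^{(-1)}$ under $\langle\cdot,\cdot\rangle_{-1}$ collapses the double sum to a single terminating ${}_2F_1(-m,-n;b;1)$. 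The Chu--Vandermonde summation evaluates this to a multiple of $(b)_{m+n}$. This is what makes the method succeed, since it reproduces exactly the Hankel-type entry $(b)_{m+n}$ of Andrews's matrix.

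Finally I would insert $h_m^{(1)}$ and the cross inner product into \eqref{gfd}, obtaining the determinant of $\frac{m!(b)_m}{2\cdot 3^{3m}}\delta_{mn} + (-1)^{m+n}\frac{(b)_{m+n}}{2\cdot 3^{3(m+n)/2}}$ equal to $\prod_n h_n^{(0)}$. Multiplying the $n$th row and the $n$th column by $(-1)^n 2^{1/2} 3^{3n/2}$ clears the signs and the powers of $3$ in the entries, turning them into $m!(b)_m\delta_{mn} + (b)_{m+n}$, and multiplies the determinant by $2^N 3^{3N(N-1)/2}$. I expect the only remaining obstacle to be bookkeeping rather than ideas: one verifies that this power of $3$ exactly cancels the powers of $3$ carried by $\prod_n h_n^{(0)}$ (a short, parity-dependent computation), that the factors $b(b+4)/4$ from the odd norms collect with exponent $[N/2]$, and that the surviving Pochhammer symbols assemble into the stated double product.
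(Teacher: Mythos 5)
Your proposal is correct and coincides essentially step for step with the paper's own proof: the same Wilson-weight identification of $w_0$ with parameters $\left(0,\frac b6,\frac b6+\frac 13,\frac b6+\frac 23\right)$ (and first parameter $1$ for $x^2w_0$ via $x^2|\Gamma(\textup ix)|^2=|\Gamma(1+\textup ix)|^2$), the same rescaled Meixner--Pollaczek polynomials $P_n^{(b/2)}\left(3x;\frac\pi2\pm\frac\pi6\right)$ with the cross pairing collapsed by the expansion \eqref{me} and Chu--Vandermonde into a multiple of $(b)_{m+n}$, and the same final row/column rescaling by $(-1)^n2^{1/2}3^{3n/2}$. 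Even your bookkeeping factor $2^N3^{3N(N-1)/2}$ matches what the paper's simplification implicitly carries out, so there is nothing to add.
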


Dividing the $n$th column by $n!\,(b)_n$ and writing
$$\frac{(b)_{m+n}}{n!\,(b)_n}=\binom{b+m+n-1}{n},$$
we see that Theorem \ref{at} is indeed 
equivalent to the evaluation of \eqref{ad}.

\section{The CEKZ variations}
\label{cvs}

We will now modify our proof to cover the three variations of Andrews's determinant discovered by Ciucu, Eisenk\"olbl, Krattenthaler and Zare \cite{cekz}.

For the first  variation, we take
\begin{subequations}\label{sw}
\begin{align}\label{swpm}w_{\pm 1}(x)&=\frac{3^{(b+2)/2}}{4\pi\Gamma(b)}\, e^{\pm\pi x}\Gamma\left(\frac b2+3\textup ix+1\right)\Gamma\left(\frac b2-3\textup ix\right), \\
\label{swn}w_0(x)&=\frac{3^{(b+2)/2}}{2\pi\Gamma(b)}\, \cosh(\pi x)
\Gamma\left(\frac b2+3\textup ix+1\right)\Gamma\left(\frac b2-3\textup ix\right)
. \end{align}
\end{subequations}
In other words, $w_k$ are obtained by multiplying the weights in \eqref{w}
with
$$\frac{\Gamma(b/2+3\textup ix+1)}{\Gamma(b/2+3\textup ix)}=3i\left(x-\frac{\textup i b}6\right).
 $$

Recall that, in general,  if $p_n$ are monic orthogonal polynomials with
$$\int p_m(x)p_n(x)\,d\mu(x)=h_n\delta_{mn},$$ 
then
$$\tilde p_n(x)=\frac{p_{n+1}(x)-\frac{p_{n+1}(a)}{p_n(a)}\,p_n(x)}{x-a} $$
are monic orthogonal polynomials with
$$\int \tilde p_m(x)\tilde p_n(x)\,(x-a)\,d\mu(x)=\tilde h_n\delta_{mn},$$ 
where
$$\tilde h_n=-\frac{p_{n+1}(a)}{p_n(a)}\,h_n.$$

In the case at hand, it follows that 
$$ h_n^{(0)}\bigg|_{w_0 \text{ as in \eqref{swn}}}=-3\textup i\frac{p_{n+1}^{(0)}(\textup ib/6)}{p_n^{(0)}(\textup ib/6)}\,h_n^{(0)}\bigg|_{w_0 \text{ as in \eqref{wn}}}, $$
where the quantities on the right-hand side are given in 
 \eqref{pnn}. Applying the explicit formula \eqref{wp}
with $a_1$ and $a_2$ interchanged, both ${}_4F_3$:s reduce to a single term, and we find that
\begin{align*}p_{2n}^{(0)}\left(\frac{\textup i b}6\right)\Bigg|_{w_0 \text{ as in \eqref{wn}}}&=\frac{(-1)^n(b/6)_n(b/3+1/3)_n(b/3+2/3)_n}{(b/2+n)_n},\\
 p_{2n+1}^{(0)}\left(\frac{\textup i b}6\right)\Bigg|_{w_0 \text{ as in \eqref{wn}}}&=\frac{\textup i b}{6}\frac{(-1)^n(b/6+1)_n(b/3+1/3)_n(b/3+2/3)_n}{(b/2+n+1)_n}. \end{align*}
After simplification, this gives
\begin{subequations}\label{hts}
\begin{align}h_{2n}^{(0)}\bigg|_{w_0 \text{ as in \eqref{swn}}}&=\frac{ b}{2}\frac{n!(b/2+1)_n(b/2+1)_{3n}(b+1)_{3n}}{3^{6n}(b/2+1)_{2n}^2}, \\
h_{2n+1}^{(0)}\bigg|_{w_0 \text{ as in \eqref{swn}}}&=\frac{b(b+1)(b+4)}{2}\frac{n!(b/2+1)_n(b/2+3)_{3n}(b+3)_{3n}}{3^{6n+4}(b/2+2)_{2n}^2}.\end{align}
\end{subequations}

The remaining quantities that we need  can be obtained from the following Lemma. We formulate it so as to cover also some cases needed in \S \ref{nvs}.

\begin{lemma}\label{mml}
For $b$ and $t$ positive and $-\pi/2<\phi<\pi/2$, define the pairing
$$\langle p,q\rangle_{\phi}=\frac{t(2\cos\phi)^{b+1}}{2\pi\Gamma(b+1)}
\int_{-\infty}^\infty p(x)q(x)\,e^{ 2\phi t x}\Gamma\left(\frac b2+t\textup ix+1\right)
\Gamma\left(\frac b2-t\textup ix\right)\,dx.
 $$
Then, the rescaled Meixner--Pollaczek polynomials
$$p_n(x)=\frac{n!}{(2t\cos\phi)^n}\,P_n^{((b+1)/2)}\left(tx-\frac{\textup i} 2;\frac\pi 2+\phi\right) $$
are monic and satisfy the orthogonality relation
\begin{equation}\label{rmo}\langle p_m,p_n\rangle_\phi=\frac{e^{\textup i\phi}n!(b+1)_n}{(2t\cos\phi)^{2n}}\,\delta_{mn} \end{equation}
as well as
\begin{equation}\label{rme}\langle p_m,p_n\rangle_{-\phi}=e^{-\textup i\phi}\left(-\frac{\tan\phi}t\right)^{m+n}(b+1)_m(b+1)_n\,{}_2F_1\left(\begin{matrix}-m,-n\\b+1 \end{matrix};\frac 1{4\sin^2\phi}\right).
\end{equation}
\end{lemma}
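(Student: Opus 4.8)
The plan is to read off \eqref{rmo} directly from the Meixner--Pollaczek orthogonality \eqref{mpo} via a change of variables, and then to deduce the cross-pairing \eqref{rme} from \eqref{rmo} using the connection formula \eqref{me}. Writing $\lambda=(b+1)/2$, so that $2\lambda=b+1$, I would substitute $u=tx-\frac{\textup i}2$ in the pairing integral. This shift turns the product $\Gamma(\lambda+\textup iu)\Gamma(\lambda-\textup iu)$ into exactly $\Gamma(\frac b2+t\textup ix+1)\Gamma(\frac b2-t\textup ix)$ and turns $e^{2\phi u}$ into $e^{-\textup i\phi}e^{2\phi tx}$; since $\sin(\frac\pi2+\phi)=\cos\phi$, the weight of $\langle\cdot,\cdot\rangle_\phi$ equals $\frac{t(2\cos\phi)^{b+1}}{2\pi\Gamma(b+1)}\,e^{\textup i\phi}$ times the Meixner--Pollaczek weight in \eqref{mpo} with parameter $\lambda$ and angle $\frac\pi2+\phi$, evaluated at $u$. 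Because $P_n^{(\lambda)}(\,\cdot\,;\frac\pi2+\phi)$ has leading coefficient $(2\cos\phi)^n/n!$ in $u$, hence $(2t\cos\phi)^n/n!$ in $x$, the prescribed normalization makes $p_n$ monic. With $dx=du/t$, invoking \eqref{mpo} and simplifying constants (the factors $\frac{t(2\cos\phi)^{b+1}}{2\pi\Gamma(b+1)}$ cancel against $\frac1t\cdot\frac{2\pi\Gamma(b+1)}{(2\cos\phi)^{b+1}}$, and $h_n^{\text{MP}}(\lambda)=(b+1)_n/n!$) yields \eqref{rmo}.

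The one genuinely analytic point, and the step I expect to require the most care, is that the substitution moves the integration onto the horizontal line $\operatorname{Im}u=-\frac12$, whereas \eqref{mpo} is an integral over the real axis. I would justify shifting the contour back by Cauchy's theorem: the integrand is the product of the entire functions $P_m^{(\lambda)}P_n^{(\lambda)}$ with the meromorphic weight, whose poles nearest the real axis come from $\Gamma(\lambda-\textup iu)$, located at $u=-\textup i(\lambda+k)$ for $k\geq0$. The closest of these has imaginary part $-\lambda=-(b+1)/2$, which lies strictly below $\operatorname{Im}u=-\frac12$ precisely when $b>0$. Thus no poles are crossed, and the decay of $|\Gamma(\lambda\pm\textup iu)|$ as $\operatorname{Re}u\to\pm\infty$ kills the connecting segments; this is exactly where the hypothesis $b>0$ enters.

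For \eqref{rme} I would exploit that, by \eqref{rmo} with $\phi$ replaced by $-\phi$, the form $\langle\cdot,\cdot\rangle_{-\phi}$ is the orthogonality pairing for the rescaled polynomials $\tilde p_n(x)=\frac{n!}{(2t\cos\phi)^n}P_n^{(\lambda)}(tx-\frac{\textup i}2;\frac\pi2-\phi)$, with $\langle\tilde p_m,\tilde p_n\rangle_{-\phi}=e^{-\textup i\phi}\frac{n!(b+1)_n}{(2t\cos\phi)^{2n}}\delta_{mn}$. Applying \eqref{me} at argument $tx-\frac{\textup i}2$ expresses $p_n$ as a finite linear combination $p_n=\sum_{k=0}^n c_{n,k}\tilde p_k$ with explicit coefficients $c_{n,k}$ built from $(-n)_k$, $(2\sin\phi)^{n-k}$ and $(2t\cos\phi)^{k-n}$. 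Substituting this into $\langle p_m,p_n\rangle_{-\phi}$ and using orthogonality of the $\tilde p_k$ collapses the double sum to a single sum over $k\leq\min(m,n)$.

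The main computational chore is then bookkeeping the powers of $2\sin\phi$ and $2t\cos\phi$ together with the Pochhammer symbols: the surviving sum becomes $\sum_k\frac{(-m)_k(-n)_k}{(b+1)_k\,k!}(4\sin^2\phi)^{-k}={}_2F_1(-m,-n;b+1;1/(4\sin^2\phi))$, while the prefactor collects into $e^{-\textup i\phi}(-\tan\phi/t)^{m+n}(b+1)_m(b+1)_n$, which is exactly \eqref{rme}. I would note the contrast with \eqref{pp}: there the ${}_2F_1$ has argument $1$ and is summed by Chu--Vandermonde, whereas here the argument $1/(4\sin^2\phi)$ is generic, so no summation occurs and the series is simply recognized as the stated hypergeometric function.
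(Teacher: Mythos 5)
Your proposal is correct and takes essentially the same route as the paper's proof: the substitution $u=tx-\tfrac{\textup i}{2}$ together with a contour shift back to the real line (justified by locating the poles of $\Gamma\left(\tfrac{b+1}2-\textup iu\right)$ at $u=-\textup i\left(\tfrac{b+1}2+k\right)$, which lie below the shifted line exactly when $b>0$, plus the exponential decay of the Gamma factors) reduces \eqref{rmo} to the Meixner--Pollaczek orthogonality \eqref{mpo}, and \eqref{rme} then follows by expanding $p_n$ via \eqref{me} in the polynomials orthogonal for $\langle\cdot,\cdot\rangle_{-\phi}$ and collapsing the double sum. Your pole analysis is in fact slightly more explicit than the paper's, which only asserts that no poles are crossed and cites an estimate for the Gamma product.
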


\begin{proof}
Consider integrals of the form
\begin{equation}\label{gi}\frac{t(2\cos\phi)^{b+1}}{2\pi\Gamma(b+1)}
\int_{-\infty}^\infty p(x)\,e^{ 2\phi t x}\Gamma\left(\frac b2+t\textup ix+1\right)
\Gamma\left(\frac b2-t\textup ix\right)\,dx\end{equation}
with $p$ a polynomial. If we replace $x\mapsto x+\textup i/2t$ and 
 then
shift the contour of integration back to the real line, the value of the integral does not change. This is true  since the contour does not cross any poles of 
the integrand
and since,
 by \cite[Cor.\ 1.4.4]{aar}, for large values of $|\operatorname{Re} x|$
one may estimate
$$\left|\Gamma\left(\frac b2+1+t\textup ix\right)
\Gamma\left(\frac b2-t\textup ix\right)\right|\leq
C|\operatorname{Re} x|^be^{-\pi t|\operatorname{Re}x|}$$
uniformly in any vertical strip.
Thus, making also a further change of variables $x\mapsto x/t$, we find that
 \eqref{gi} equals
$$ \frac{(2\cos\phi)^{b+1}e^{\textup i\phi}}{2\pi\Gamma(b+1)}
\int_{-\infty}^\infty p\left(\frac{x+\textup i/2}t\right)\,e^{ 2\phi  x}\left|\Gamma\left(\frac {b+1}2+\textup ix+1\right)\right|^2\,dx.$$
The orthogonality \eqref{rmo} then follows from \eqref{mpo}. 
Moreover,  \eqref{me} gives
$$p_n^{(\phi)}=(-1)^n(b+1)_n\sum_{k=0}^n\frac{(-n)_k}{k!(b+1)_k}\left(\frac{\tan\phi}t\right)^{n-k}p_k^{(-\phi)}, $$
where we indicate the $\phi$-dependence of the polynomials $p_n$. 
Combining this with
 \eqref{rmo}, with $\phi$ replaced by $-\phi$, gives \eqref{rme}. 
\end{proof}

In the case at hand, it follows from Lemma \ref{mml} that the monic orthogonal polynomials
 with respect to $ w_1$ are given by
$$ p_n^{(1)}(x)=\frac{n!}{3^{3n/2}}\,P_n^{((b+1)/2)}\left(3x-\frac{\textup i}2,\frac{2\pi}3\right)
$$
and that 
\begin{align}\label{hats} h_n^{(1)}&=\frac{e^{\textup i\pi/6}(b)_{n+1}n!}{2\cdot 3^{3n+1/2}},  \\
\label{sts}\langle p_m^{(1)},p_n^{(1)}\rangle_{-1}&=(-1)^{m+n}\frac{e^{-\textup i\pi/6}(b)_{m+n+1}}{2\cdot 3^{(3m+3n+1)/2}}.\end{align}
Plugging \eqref{hts}, \eqref{hats} and \eqref{sts} into \eqref{gfd}, replacing $b$ by $b-1$ and simplifying, we recover the following result.

\begin{theorem}[Ciucu, Eisenk\"olbl, Krattenthaler and Zare]
\label{ct}
One has
\begin{multline*}\det_{0\leq m,n\leq N-1}\big({m!} \,{(b)_m}\delta_{mn}-e^{2\textup i\pi/3}{(b)_{m+n}}\big)
=\left({e^{-\textup i\pi/6}}{\sqrt 3}\right)^N\left(\frac{b(b+3)}{3}\right)^{\left[\frac {N}2\right]}\\
\times
\prod_{n=0}^{[(N-1)/2]}\frac{n!\left(\frac{b+1}2\right)_n\left(\frac{b+1}2\right)_{3n}(b)_{3n}}{\left(\frac{b+1}2\right)_{2n}^2}\prod_{n=0}^{[(N-2)/2]}\frac{n!\left(\frac{b+1}2\right)_n\left(\frac{b+5}2\right)_{3n}(b+2)_{3n}}{\left(\frac{b+3}2\right)_{2n}^2}.
\end{multline*}
\end{theorem}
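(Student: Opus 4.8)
The plan is to apply the master identity \eqref{gfd} to the weights \eqref{sw}. All three ingredients needed on the left of \eqref{gfd} have already been assembled above: the norms $h_n^{(1)}$ are given in \eqref{hats} and the cross pairings $\langle p_m^{(1)},p_n^{(1)}\rangle_{-1}$ in \eqref{sts}, both coming from the rescaled Meixner--Pollaczek polynomials of Lemma \ref{mml}, while the norms $h_n^{(0)}$ are recorded in \eqref{hts}. Substituting these directly into \eqref{gfd} produces, with no further analytic work, the identity
\begin{multline*}
\det_{0\leq m,n\leq N-1}\left(\frac{e^{\textup i\pi/6}(b)_{m+1}\,m!}{2\cdot 3^{3m+1/2}}\,\delta_{mn}\right.\\
\left.+(-1)^{m+n}\frac{e^{-\textup i\pi/6}(b)_{m+n+1}}{2\cdot 3^{(3m+3n+1)/2}}\right)=\prod_{n=0}^{N-1}h_n^{(0)}.
\end{multline*}
From here on the argument is purely cosmetic: I would first bring the determinant to the shape of the statement, and then collapse the product into closed form.

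For the first task I would scale the $n$th row and the $n$th column by the common factor $(-1)^n\sqrt2\,3^{3n/2+1/4}e^{-\textup i\pi/12}$. Its square, $2\cdot 3^{3n+1/2}e^{-\textup i\pi/6}$, clears the diagonal prefactor so that the diagonal becomes $m!\,(b)_{m+1}$, while on the off-diagonal the two scalings combine with the sign $(-1)^{m+n}$ and the phase $e^{-\textup i\pi/6}$ to leave the constant $e^{-\textup i\pi/3}=-e^{2\textup i\pi/3}$. This yields $\det_{0\leq m,n\leq N-1}(m!\,(b)_{m+1}\delta_{mn}-e^{2\textup i\pi/3}(b)_{m+n+1})$ on the left, at the price of an overall scalar $2^N e^{-\textup iN\pi/6}3^{3N(N-1)/2+N/2}$ now multiplying the right. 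Finally I would rename $b\mapsto b-1$. Because $(b-1)_{m+1}=(b-1)(b)_m$ and $(b-1)_{m+n+1}=(b-1)(b)_{m+n}$, a factor $b-1$ is common to every entry, so the left-hand side becomes $(b-1)^N\det_{0\leq m,n\leq N-1}(m!\,(b)_m\delta_{mn}-e^{2\textup i\pi/3}(b)_{m+n})$, exactly the determinant of the statement up to the spurious $(b-1)^N$. That factor should disappear against the $N$ factors $b-1$ that \eqref{hts} contributes on the right after the shift, one from each of the prefactors $b/2$ and $b(b+1)(b+4)/2$.

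It then remains to simplify $e^{-\textup iN\pi/6}3^{3N(N-1)/2+N/2}\prod_{n=0}^{N-1}h_n^{(0)}\big|_{b\mapsto b-1}$ and to identify it with the claimed product. I would split the product by the parity of $n$, so that the even norms $h_{2n}^{(0)}$ and the odd norms $h_{2n+1}^{(0)}$ of \eqref{hts} generate the two products over $0\leq n\leq[(N-1)/2]$ and $0\leq n\leq[(N-2)/2]$ respectively; the phase and the half-power of $3$ become the prefactor $(e^{-\textup i\pi/6}\sqrt3)^N$, the odd norms supply the $b(b+3)$ inside $(b(b+3)/3)^{[N/2]}$, and the $1/2$ present in every norm cancels the $2^N$ from the scaling so that no power of $2$ survives (in contrast to Theorem \ref{at}). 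The only real obstacle is this bookkeeping: verifying that the bulk power $3^{3N(N-1)/2}$ cancels precisely against the factors $3^{-6n}$ in the two families of norms, and that after the shift $b\mapsto b-1$ the Pochhammer symbols reorganize into $(\tfrac{b+1}2)_n$, $(\tfrac{b+1}2)_{3n}$, $(b)_{3n}$, $(\tfrac{b+1}2)_{2n}^2$ and their odd analogues. I would guard against sign and exponent slips by checking the identity for $N=1,2,3$ before committing to the general simplification.
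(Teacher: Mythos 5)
Your proposal is correct and follows exactly the paper's route: the paper likewise obtains Theorem \ref{ct} by plugging \eqref{hts}, \eqref{hats} and \eqref{sts} into \eqref{gfd}, replacing $b$ by $b-1$, and simplifying. Your explicit bookkeeping checks out — the row/column scaling by $(-1)^n\sqrt2\,3^{3n/2+1/4}e^{-\textup i\pi/12}$ does turn the off-diagonal constant into $e^{-\textup i\pi/3}=-e^{2\textup i\pi/3}$, the $(b-1)^N$ cancels one factor per norm after the shift, and the power count $3^{3N(N-1)/2+N/2}\cdot 3^{-6\Sigma n-4[N/2]}=3^{N/2-[N/2]}$ matches $(\sqrt3)^N(1/3)^{[N/2]}$ for both parities of $N$.
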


The second variation is obtained by choosing $w_1$ as in \eqref{swpm} but replacing
 $w_{-1}$ by its negative. We must then take
 \begin{equation}\label{tsp}w_0(x)=\frac{3^{(b+2)/2}}{2\pi\Gamma(b)}\, \sinh(\pi x)
\Gamma\left(\frac b2+3\textup ix+1\right)\Gamma\left(\frac b2-3\textup ix\right)
. \end{equation}
Since $(x+\textup ib/6)w_0(x)$ is odd, the 
  monic orthogonal polynomials with respect to 
 $w_0$ can be constructed as
$p_{2n}^{(0)}(x)=s_n(x^2)$, $p_{2n+1}^{(0)}(x)= (x+\textup ib/6)\,t_n(x^2)$, where 
$s_n$ are orthogonal on the positive half-line with respect to 
$(w_0(x)+w_0(-x))/2=xw_0(x)/(x-\textup i b/6)$
and $t_n$  orthogonal with respect to $x(x+\textup i b/6)w_0(x)$.

To identify these polynomials, we write
\begin{align*} \frac{x}{x-\textup ib/6}\, w_0(x)&=\frac{\textup i b}{2\cdot 3^{1/2}}\frac{\Gamma(a_1+a_2+a_3+a_4)}{4\pi\prod_{1\leq j<k\leq 4}\Gamma(a_j+a_k)}\\
&\quad\times\left|\frac{\Gamma(a_1+\textup ix)\Gamma(a_2+\textup ix)\Gamma(a_3+\textup ix)\Gamma(a_4+\textup ix)}{\Gamma(2\textup ix)}\right|^2,\\
x\left(x+\frac{\textup i b}6\right)w_0(x)&=\frac{\textup i b(b+1)(b+2)}{2\cdot 3^{7/2}}\frac{\Gamma(b_1+b_2+b_3+b_4)}{4\pi\prod_{1\leq j<k\leq 4}\Gamma(b_j+b_k)}\\
&\quad\times\left|\frac{\Gamma(b_1+\textup ix)\Gamma(b_2+\textup ix)\Gamma(b_3+\textup ix)\Gamma(b_4+\textup ix)}{\Gamma(2\textup ix)}\right|^2, \end{align*}
where
\begin{align*}(a_1,a_2,a_3,a_4)&=\left(\frac 12,\frac b6,\frac b6+\frac13,\frac b6+\frac 23\right),\\
(b_1,b_2,b_3,b_4)&=\left(\frac 12,\frac b6+\frac 13,\frac b6+\frac 23,\frac b6+1\right).\end{align*}
It follows that
\begin{align*}
p_{2n}^{(0)}(x)&=\frac{(-1)^n}{(b/2+n+1/2)_n}\,W_n\left(x^2;\frac 12,\frac b6,\frac b6+\frac13,\frac b6+\frac 23\right),\\
p_{2n+1}^{(0)}(x)&=\frac{(-1)^n}{(b/2+n+3/2)_n}\,\left(x+\frac{\textup ib}6\right)W_n\left(x^2;\frac 12,\frac b6+ \frac 13,\frac b6+\frac 23,\frac b6+1\right)
\end{align*}
and that
\begin{subequations}\label{hv}
\begin{align}
\notag h_{2n}^{(0)}&=\frac{\textup i b}{2\cdot 3^{1/2}(b/2+n+1/2)_n^2}\,h_n^{\text W}\left(\frac 12,\frac b6,\frac b6+\frac13,\frac b6+\frac 23\right)\\
&\label{hve}=\frac{\textup i b}{2}\frac{n!\left(\frac{b+1}2\right)_n\left(\frac{b+3}2\right)_{3n}(b+1)_{3n}}{3^{6n+1/2}\left(\frac{b+1}2\right)_{2n}\left(\frac{b+3}2\right)_{2n}},\\
\notag h_{2n+1}^{(0)}&= \frac 1{(b/2+n+3/2)_n^2}\frac{\textup i b(b+1)(b+2)}{2\cdot 3^{7/2}}\,h_n^{\text W}\left(\frac 12,\frac b6+\frac 13,\frac b6+\frac 23,\frac b6+1\right)\\
&=\frac{\textup i b(b+1)(b+2)}{2\cdot 3^{6n+7/2}}\frac{n!(b/2+3/2)_n(b/2+5/2)_{3n}(b+3)_{3n}}{(b/2+3/2)_{2n}(b/2+5/2)_{2n}}.
\end{align}
\end{subequations}

Since \eqref{hats} is still valid and \eqref{sts} holds up to a change of sign, 
 we conclude that
$$
\det_{0\leq m,n\leq N-1}\left(\frac{e^{\textup i\pi/6}m!\,(b)_{m+1}}{2\cdot3^{(3m+1)/2}}\,\delta_{mn}+(-1)^{m+n+1}\frac{e^{-\textup i\pi/6}(b)_{m+n+1}}{2\cdot 3^{(3m+3n+1)/2}}\right)=\prod_{n=0}^{N-1}  h_{n}^{(0)},$$
with  $h_n^{(0)}$ as in \eqref{hv}.
Replacing $b$ with  $b-1$ and simplifying, 
we arrive at the following  result.

\begin{theorem}[Ciucu, Eisenk\"olbl, Krattenthaler and Zare]
\label{ct2}
One has
\begin{multline*}\det_{0\leq m,n\leq N-1}\big({m!} \,{(b)_m}\delta_{mn}+e^{2\textup i\pi/3}{(b)_{m+n}}\big)
=e^{\textup i\pi N/3}\big(b(b+1)\big)^{\left[\frac {N}2\right]}\\
\times
\prod_{n=0}^{[(N-1)/2]}\frac{n!\left(\frac{b}2\right)_n\left(\frac{b+2}2\right)_{3n}(b)_{3n}}{\left(\frac{b}2\right)_{2n}\left(\frac{b+2}2\right)_{2n}}\prod_{n=0}^{[(N-2)/2]}\frac{n!\left(\frac{b+2}2\right)_n\left(\frac{b+4}2\right)_{3n}(b+2)_{3n}}{\left(\frac{b+2}2\right)_{2n}\left(\frac{b+4}2\right)_{2n}}.
\end{multline*}
\end{theorem}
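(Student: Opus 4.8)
The plan is to apply the general scheme behind \eqref{gfd} a third time, with a sign-modified choice of weights. Concretely, I would retain $w_1$ as in \eqref{swpm} but take $w_{-1}$ to be the \emph{negative} of the weight in \eqref{swpm}; the defining relation $w_0=w_1+w_{-1}$ then forces the $\sinh$-weight \eqref{tsp}. Since $w_1$ is unchanged from the first variation, the monic polynomials $p_n^{(1)}$ and their norms $h_n^{(1)}$ are exactly those furnished by Lemma \ref{mml}, so \eqref{hats} still holds; and because $w_{-1}$ has merely been negated, $\langle p_m^{(1)},p_n^{(1)}\rangle_{-1}$ acquires an overall minus sign relative to \eqref{sts}. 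It is this sign change, propagated through \eqref{gfd}, that is ultimately responsible for turning the $-e^{2\textup i\pi/3}$ of Theorem \ref{ct} into the $+e^{2\textup i\pi/3}$ of the present statement.

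The substantive work is to compute the product $\prod_n h_n^{(0)}$ for the $\sinh$-weight. In the $\cosh$ case $w_0$ was even and the orthogonal polynomials split cleanly into even and odd parts; here $w_0$ is not even and that splitting fails. The key is instead the observation that $(x+\textup ib/6)w_0(x)$ is odd. From this one computes that the even part of $w_0$ equals $xw_0(x)/(x-\textup ib/6)$, which lets me build the monic orthogonal polynomials as $p_{2n}^{(0)}(x)=s_n(x^2)$ and $p_{2n+1}^{(0)}(x)=(x+\textup ib/6)t_n(x^2)$, where $s_n$ and $t_n$ are monic orthogonal on the half-line with respect to $xw_0(x)/(x-\textup ib/6)$ and $x(x+\textup ib/6)w_0(x)$ respectively. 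I would then invoke the duplication, triplication and reflection formulas for $\Gamma$, exactly as in \S\ref{cs}, to recast these two half-line weights as Wilson weights \eqref{wo} with first parameter $1/2$ and parameter quadruples $(1/2,b/6,b/6+1/3,b/6+2/3)$ and $(1/2,b/6+1/3,b/6+2/3,b/6+1)$. This identifies $s_n$ and $t_n$ as Wilson polynomials and reads off $h_{2n}^{(0)}$ and $h_{2n+1}^{(0)}$ from $h_n^{\text W}$, as recorded in \eqref{hv}.

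With \eqref{hats}, the sign-flipped \eqref{sts}, and \eqref{hv} all in hand, the identity \eqref{gfd} becomes an explicit equality with an off-diagonal term carrying the factor $(-1)^{m+n+1}$ and right-hand side $\prod_{n=0}^{N-1}h_n^{(0)}$. To finish I would rescale the $n$th row and column to absorb the powers of $3$ and the $e^{\pm\textup i\pi/6}$ constants, replace $b$ by $b-1$, and regroup the even- and odd-indexed factors of $h_n^{(0)}$ into the two products over $[(N-1)/2]$ and $[(N-2)/2]$ in the statement; the prefactor $e^{\textup i\pi N/3}(b(b+1))^{[N/2]}$ then emerges from the $n$-independent constants.

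The main obstacle is the orthogonal-polynomial computation for the $\sinh$-weight: verifying the oddness of $(x+\textup ib/6)w_0(x)$, getting the two half-line weights into Wilson form with the correct shifted parameters and overall constants, and simplifying $h_n^{\text W}$ via the gamma identities into the clean Pochhammer products \eqref{hv}. Once that is done, the sign bookkeeping in \eqref{gfd} and the final consolidation of Pochhammer symbols, including the floor-function index ranges, are routine, though one must track the constants and the even/odd split carefully to match the stated prefactor.
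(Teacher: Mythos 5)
Your proposal is correct and follows the paper's own proof of Theorem \ref{ct2} essentially step for step: the same negated $w_{-1}$ forcing the $\sinh$-weight \eqref{tsp}, the same key observation that $(x+\textup ib/6)w_0(x)$ is odd with the resulting split $p_{2n}^{(0)}(x)=s_n(x^2)$, $p_{2n+1}^{(0)}(x)=(x+\textup ib/6)t_n(x^2)$ into Wilson polynomials with parameter quadruples $(1/2,b/6,b/6+1/3,b/6+2/3)$ and $(1/2,b/6+1/3,b/6+2/3,b/6+1)$ yielding \eqref{hv}, and the same final assembly via \eqref{gfd} with \eqref{hats} and the sign-flipped \eqref{sts} followed by $b\mapsto b-1$. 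No gaps; nothing further is needed.
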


For the final variation, we choose $w_1$  as in \eqref{wpm}, but replace the weight function $w_{-1}$ by its negative. We must then take
\begin{equation}\label{w2}w_0(x)=\frac{3^{(b+2)/2}}{2\pi\Gamma(b)}\sinh(\pi x)\left|\Gamma\left(\frac b2+3\textup ix\right)\right|^2.
\end{equation}
Since $\langle 1,1\rangle_0=0$, there does not exist a system  of orthogonal polynomials 
with respect to $w_0$. Thus, \eqref{gfd} is not  applicable.
However,  we can  compute the
determinant \eqref{gd} using orthogonal polynomials
with respect to the weight $xw_0(x)$.

Consider  more generally the determinant \eqref{gd}, when
the scalar product is given by integration against an odd weight function 
$w_0$. Suppose  there exist monic orthogonal polynomials $q_n$ with
$$\int_{-\infty}^\infty q_m(x^2)q_n(x^2)\,xw_0(x)\,dx=2\int_0^\infty q_m(x^2)q_n(x^2)\,xw_0(x)\,dx=c_n\delta_{mn}.  $$
Then, the monic polynomials
$p_{2n}(x)=q_n(x^2)$, $p_{2n+1}(x)=xq_n(x^2)$ 
satisfy
$$\langle p_m,p_n\rangle_0=\begin{cases}c_k, & \{m,n\}=\{2k,2k+1\},\\
0, & \text{else}.
\end{cases} $$
Choosing $p_n$ in this way, $D$ reduces to the block-diagonal determinant
$$\left|\begin{matrix}0&c_0&0&0&\dots&0\\
c_0&0&0&0&&\\
0&0&0&c_1&&\\
0&0&c_1&0&&\\
\vdots&&&&\ddots&\\
0&&&&&0
\end{matrix}\right|.$$
Thus, as a substitute for \eqref{gfd} we have
\begin{multline}\label{ofd}\det_{0\leq m,n\leq N-1}\left(h_{m}^{(1)}\delta_{mn}+\langle p_{m}^{(1)},p_{n}^{(1)}\rangle_{-1}\right)\\
=
\begin{cases}
(-1)^{N/2}\left(c_0c_1\dotsm c_{(N-2)/2}\right)^2, & N \text{ even},\\
0, & N \text{ odd}.
\end{cases}\end{multline}

In the case at hand,  we observe that
$$xw_0(x)\bigg|_{w_0 \text{ as in } \eqref{w2}}=\frac 1{3\textup i}\frac{x}{x-\textup ib/6} w_0(x)\bigg|_{w_0 \text{ as in } \eqref{tsp}}, $$
which gives
$$c_n=\frac 1{3\textup i}\,h_{2n}^{(0)}\bigg|_{\text{as in \eqref{hve}}}
=\frac{b}{2}\cdot\frac{n!\left(\frac{b+1}2\right)_n\left(\frac{b+3}2\right)_{3n}(b+1)_{3n}}{3^{6n+3/2}\left(\frac{b+1}2\right)_{2n}\left(\frac{b+3}2\right)_{2n}}. $$
Since \eqref{pn} holds and \eqref{pp} holds up to a change of sign,
\eqref{ofd} can be simplified to the following form.

\begin{theorem}[Ciucu, Eisenk\"olbl, Krattenthaler and Zare]
\label{ct3}
When $N$ is even, 
\begin{multline*}\det_{0\leq m,n\leq N-1}\big({m!} \,{(b)_m}\delta_{mn}-{(b)_{m+n}}\big)\\
=(-1)^{\frac N2}b^N\prod_{n=0}^{(N-2)/2}\left(\frac{n!\left(\frac{b+1}2\right)_n\left(\frac{b+3}2\right)_{3n}(b+1)_{3n}}{\left(\frac{b+1}2\right)_{2n}\left(\frac{b+3}2\right)_{2n}}\right)^2,
\end{multline*}
whereas if $N$ is odd the determinant  vanishes.
\end{theorem}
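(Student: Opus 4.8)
The plan is to bypass the main identity \eqref{gfd}, which is unavailable here: since the weight \eqref{w2} is odd we have $\langle 1,1\rangle_0=0$, so no monic orthogonal system exists for $\langle\cdot,\cdot\rangle_0$ and the usual reduction of \eqref{gd} breaks down. Instead I would evaluate \eqref{gd} through the block-antidiagonal substitute \eqref{ofd}, built from the polynomials $p_{2n}(x)=q_n(x^2)$, $p_{2n+1}(x)=xq_n(x^2)$. Its right-hand side already records the dichotomy of the theorem: when $N$ is odd the parity pairing leaves the top index unmatched, forcing a zero row and hence a vanishing determinant, so that case is immediate; when $N$ is even the value is $(-1)^{N/2}(c_0\cdots c_{(N-2)/2})^2$, and the whole task becomes one of inserting explicit constants.

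For the ingredients I would note that $w_1$ is the unchanged weight \eqref{wpm} of Section~\ref{cs}, so the polynomials $p_n^{(1)}$ and the norms $h_n^{(1)}$ are those of that section, giving $h_m^{(1)}=\tfrac{m!(b)_m}{2\cdot3^{3m}}$ by \eqref{pn}; and that, because $w_{-1}$ has been replaced by its negative, $\langle p_m^{(1)},p_n^{(1)}\rangle_{-1}$ is the negative of \eqref{pp}. The entry on the left of \eqref{ofd} is therefore $\tfrac{m!(b)_m}{2\cdot3^{3m}}\delta_{mn}-(-1)^{m+n}\tfrac{(b)_{m+n}}{2\cdot3^{3(m+n)/2}}$. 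Conjugating this matrix by $S=\operatorname{diag}\big((-1)^n2^{1/2}3^{3n/2}\big)$, exactly as in the proof of Theorem~\ref{at}, normalizes the entry to $m!(b)_m\delta_{mn}-(b)_{m+n}$, the target matrix, at the cost of the factor $(\det S)^2=2^N3^{3N(N-1)/2}$.

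To pin down $c_n$ I would use the observation, recorded just before the theorem, that multiplying the odd weight \eqref{w2} by $x$ gives $\tfrac1{3\textup i}\tfrac{x}{x-\textup ib/6}$ times the odd weight \eqref{tsp} of the second variation; hence the $q_n$ coincide with the $s_n$ already constructed there and $c_n=\tfrac1{3\textup i}h_{2n}^{(0)}$ with $h_{2n}^{(0)}$ as in \eqref{hve}, that is $c_n=\tfrac{b}{2}\,3^{-(6n+3/2)}g_n$ where $g_n=\tfrac{n!(\frac{b+1}2)_n(\frac{b+3}2)_{3n}(b+1)_{3n}}{(\frac{b+1}2)_{2n}(\frac{b+3}2)_{2n}}$. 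Assembling $(\det S)^2$, the sign $(-1)^{N/2}$ and $(c_0\cdots c_{(N-2)/2})^2$, I expect the powers of $2$ to cancel and the powers of $3$ to telescope to zero through $\tfrac{3N}{2}\big[(N-1)-1-(N-2)\big]=0$, leaving exactly $(-1)^{N/2}b^N\prod_{n=0}^{(N-2)/2}g_n^2$.

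The real obstacle is conceptual, not computational: recognizing that the orthogonal-polynomial reduction degenerates for an odd $w_0$ and must be replaced by the antidiagonal evaluation \eqref{ofd} in terms of polynomials in $x^2$. After that the work is routine Pochhammer and power-of-$3$ bookkeeping. The one point needing care is that, unlike Theorems~\ref{ct} and~\ref{ct2}, no substitution $b\mapsto b-1$ occurs here, since $w_1$ remains the original weight \eqref{wpm}; the answer is read off directly in $b$.
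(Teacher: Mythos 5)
Your proposal is correct and follows essentially the same route as the paper's own proof: the block-antidiagonal substitute \eqref{ofd} with $p_{2n}(x)=q_n(x^2)$, $p_{2n+1}(x)=xq_n(x^2)$ (giving the vanishing for odd $N$), the identification $c_n=\frac 1{3\textup i}\,h_{2n}^{(0)}$ through the weight \eqref{tsp} and \eqref{hve}, and \eqref{pn} together with the sign-flipped \eqref{pp}. Your bookkeeping also checks out: conjugation by $\operatorname{diag}\big((-1)^n2^{1/2}3^{3n/2}\big)$ contributes $2^N3^{3N(N-1)/2}$, which exactly cancels the powers of $2$ and $3$ from $\prod_{n=0}^{(N-2)/2}c_n^2$, leaving $(-1)^{N/2}b^N\prod g_n^2$ with no substitution $b\mapsto b-1$, as in the paper.
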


\section{Further variations}
\label{nvs}

It is natural to look for further interesting specializations of \eqref{gfd}. 
 We have not found any more cases that are as nice as Andrews's determinant in the sense that the quantities  $h_n^{(0)}$, $h_n^{(1)}$ and  $\langle p_m^{(1)},p_n^{(1)}\rangle_{-1}$ all factor completely. However, from the viewpoint of orthogonal polynomials, there are five particularly natural cases based on continuous Hahn polynomials rather than Wilson polynomials. As we mentioned in the introduction, some of these evaluations are related to weighted enumeration of alternating sign matrices.  
Since the computations are completely parallel to those in \S \ref{cs}, we will be rather brief.

In the first of these five cases, we choose the weight functions as
\begin{align}\label{fcw}w_{\pm 1}(x)&=\frac{2^{b/2}}{2\pi\Gamma(b)}\, e^{\pm\pi x}\left|\Gamma\left(\frac b2+2\textup ix\right)\right|^2, \\
\notag w_0(x)&=\frac{2^{b/2}}{\pi\Gamma(b)}\, \cosh(\pi x)\left|\Gamma\left(\frac b2+2\textup ix\right)\right|^2, \end{align}
with  $b>0$.

With $(a_1,a_2,a_3)=(0,b/4,b/4+1/2)$ and 
$(b_1,b_2,b_3)=(1,b/4,b/4+1/2)$,
\begin{align*}w_0(x)&=\frac 1{4\pi\Gamma(a_1+a_2)\Gamma(a_1+a_3)\Gamma(a_2+a_3)}\left|\frac{\Gamma(a_1+ix)\Gamma(a_2+ix)\Gamma(a_3+ix)}{\Gamma(2ix)}\right|^2, \\
x^2w_0(x)&=\frac{b(b+2)}{16}\cdot\frac 1{4\pi\Gamma(b_1+b_2)\Gamma(b_1+b_3)\Gamma(b_2+b_3)}\\
&\quad\times\left|\frac{\Gamma(b_1+ix)\Gamma(b_2+ix)\Gamma(b_3+ix)}{\Gamma(2ix)}\right|^2. \end{align*}
Exactly as in the proof of Theorem \ref{at}, it follows that 
\begin{align}\notag h_{2n}^ {(0)}(x)&=h_n^{\text{CDH}}\left(0,\frac b4,\frac b4+\frac 12\right)=\frac{n!\left(\frac{b+1}2\right)_n\left(\frac b2\right)_{2n}}{4^n},\\
\notag h_{2n+1}^ {(0)}(x)&=\frac{b(b+2)}{16}\,h_n^{\text{CDH}}\left(1,\frac b4,\frac b4+\frac 12\right)=b(b+2)\frac{n!\left(\frac{b+1}2\right)_n\left(\frac {b+4}2\right)_{2n}}{4^{n+2}},\\
\notag p_n^{(\pm 1)}(x)&=\frac{n!}{2^{3n/2}}\,P_n^{(b/2)}\left(2x,\frac\pi2\pm \frac{\pi}4\right),\\
\label{hh} h_n^{(1)}&=\frac{n!(b)_n}{2^{3n+1}},\\
\label{sh} \langle p_m^{(1)},p_n^{(1)}\rangle_{-1}&=\frac{(-1)^{m+n}(b)_m(b)_n}{2^{m+n+1}}
\,{}_2F_1\left(\begin{matrix}-m,-n\\b \end{matrix};\frac 12\right).
\end{align}
After simplification,  \eqref{gfd} then reduces to the following new identity.

\begin{theorem}\label{hdt}
The following determinant evaluation holds:
\begin{multline*}
\det_{0\leq m,n\leq N-1}\left({m!}{(b)_m}\delta_{mn}+
2^{(m+n)/2}(b)_m(b)_n\,{}_2F_1\left(\begin{matrix}-m,-n\\b \end{matrix};\frac 12\right)\right)=2^{N^2}\\
\times\left(\frac{b(b+2)}{8}\right)^{[N/2]}\prod_{n=0}^{[(N-1)/2]}n!\left(\frac{b+1}2\right)_n\left(\frac{b}2\right)_{2n}\prod_{n=0}^{[(N-2)/2]}n!\left(\frac{b+1}2\right)_n\left(\frac{b+4}2\right)_{2n}.
\end{multline*}
\end{theorem}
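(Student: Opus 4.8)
The plan is to apply the general determinant identity \eqref{gfd} to the three bilinear forms given by integration against the weights \eqref{fcw}, following the structure of the proof of Theorem \ref{at} in \S\ref{cs} but with continuous dual Hahn polynomials in place of Wilson polynomials. First I would record the decomposition $w_0 = w_1 + w_{-1}$, which holds because $\cosh(\pi x) = \tfrac12(e^{\pi x}+e^{-\pi x})$. Granting this, \eqref{gfd} applies, and the whole problem reduces to pinning down the three families of constants $h_n^{(0)}$, $h_n^{(1)}$ and $\langle p_m^{(1)},p_n^{(1)}\rangle_{-1}$, all recorded in the excerpt, and then assembling and rescaling.

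For the diagonal constants $h_n^{(0)}$ I would, since $w_0$ is even, write $p_{2n}^{(0)}(x)=q_n(x^2)$ and $p_{2n+1}^{(0)}(x)=x\,r_n(x^2)$, so that $q_n,r_n$ are monic orthogonal with respect to $w_0$ and $x^2w_0$ on the half-line. Using the duplication formula to split $\Gamma(b/2+2\textup ix)$ into $\Gamma(b/4+\textup ix)\Gamma(b/4+1/2+\textup ix)$, and the reflection formula to produce $\cosh(\pi x)$ as $|\Gamma(\textup ix)/\Gamma(2\textup ix)|^2$, one recognizes $w_0$ and $x^2w_0$ (via $x^2|\Gamma(\textup ix)|^2=|\Gamma(1+\textup ix)|^2$) as the continuous dual Hahn weights with parameters $(0,b/4,b/4+1/2)$ and $(1,b/4,b/4+1/2)$ (the former allowed since \eqref{cho} survives at $a_1=0$). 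Reading off $h_n^{\mathrm{CDH}}$ and applying $(a)_n(a+1/2)_n=(2a)_{2n}/4^n$ then yields the stated $h_{2n}^{(0)}$ and $h_{2n+1}^{(0)}$.

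The off-diagonal constants come from identifying $p_n^{(\pm1)}$ as the rescaled Meixner--Pollaczek polynomials $\tfrac{n!}{2^{3n/2}}P_n^{(b/2)}(2x;\tfrac\pi2\pm\tfrac\pi4)$, whence $h_n^{(1)}$ follows from \eqref{mpo} and the cross term from expanding $p_n^{(1)}$ in the $p_k^{(-1)}$ via \eqref{me} and using orthogonality. Here lies the one genuine point of contrast with \S\ref{cs}: the relevant angle is $\phi=\pi/4$, so $2\sin\phi=\sqrt2\neq1$, and the resulting series is ${}_2F_1(-m,-n;b;\tfrac12)$ at argument $\tfrac12$, which — unlike the argument-$1$ series in \eqref{pp} — is \emph{not} Chu--Vandermonde summable. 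This is precisely why Theorem \ref{hdt} must retain a ${}_2F_1$ where Andrews's Theorem \ref{at} collapses to $(b)_{m+n}$, and it accounts for the factor $2^{(m+n)/2}$ in the matrix entries.

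It then remains to substitute \eqref{hh} and \eqref{sh} into \eqref{gfd} and reconcile the outcome with the claimed product. I would multiply the $m$th row and $m$th column each by $(-1)^m2^{(3m+1)/2}$; this turns the matrix of \eqref{gfd} into the matrix of Theorem \ref{hdt} (the signs absorbing the $(-1)^{m+n}$, the powers of $2$ sending the diagonal weight $2^{-(3m+1)}$ to a bare $m!\,(b)_m$ and the off-diagonal $2^{-(m+n+1)}$ to $2^{(m+n)/2}$), so that the determinant of Theorem \ref{hdt} equals $2^{(3N^2-N)/2}\prod_{n=0}^{N-1}h_n^{(0)}$. Splitting $\prod_n h_n^{(0)}$ into its even- and odd-index subproducts produces the two displayed $\prod n!(\tfrac{b+1}2)_n(\cdots)_{2n}$ factors directly, with one factor of $b(b+2)$ per odd index. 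The only delicate bookkeeping — the step I expect to be the main obstacle — is collecting all powers of $2$: the $4^{-n}$ and $4^{-(n+2)}$ inside the $h_n^{(0)}$, the rescaling factor $2^{(3N^2-N)/2}$, and the floor-function index ranges must combine to exactly $2^{N^2}(b(b+2)/8)^{[N/2]}$, using $[(N-2)/2]+1=[N/2]$ to count the $b(b+2)$ factors. A short separate check for $N$ even and $N$ odd settles this.
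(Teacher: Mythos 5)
Your proposal is correct and follows the paper's own route essentially verbatim: the same weights \eqref{fcw}, the same continuous dual Hahn identification with parameter triples $(0,b/4,b/4+1/2)$ and $(1,b/4,b/4+1/2)$, the same Meixner--Pollaczek polynomials $\frac{n!}{2^{3n/2}}P_n^{(b/2)}(2x;\frac{\pi}{2}\pm\frac{\pi}{4})$ with the expansion \eqref{me} yielding the non-summable ${}_2F_1(-m,-n;b;\frac12)$, and then \eqref{gfd} plus rescaling (your row/column factor $(-1)^m2^{(3m+1)/2}$ is exactly the ``simplification'' the paper leaves implicit, and your power-of-$2$ bookkeeping checks out). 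Your observation that the argument-$\frac12$ series fails to be Chu--Vandermonde summable, which is why the ${}_2F_1$ survives in the statement, is precisely the right diagnosis of how this case differs from Theorem \ref{at}.
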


Next, we take 
\begin{align*}
w_{\pm 1}(x)&=\pm\frac{2^{b/2}}{2\pi\Gamma(b)}\, e^{\pm \pi x}\Gamma\left(\frac b2+2\textup ix+1\right)\Gamma\left(\frac b2-2\textup ix\right), \\
 w_0(x)&=\frac{2^{b/2}}{\pi\Gamma(b)}\, \sinh(\pi x)
\Gamma\left(\frac b2+2\textup ix+1\right)\Gamma\left(\frac b2-2\textup ix\right). \end{align*}
Similarly as in the proof of Theorem \ref{ct2}, we write
\begin{align*}\frac{x}{x-\textup ib/4}\,w_0(x)
&=\frac{\textup i b}{2}\frac{1}{4\pi\prod_{1\leq j<k\leq 3}\Gamma(a_j+a_k)}\left|\frac{\Gamma(a_1+\textup ix)\Gamma(a_2+\textup ix)\Gamma(a_3+\textup ix)}{\Gamma(2\textup ix)}\right|^2,\\
x\left(x+\frac{\textup ib}4\right)w_0(x)
&=\frac{\textup i b(b+1)(b+2)}{16}\frac{1}{4\pi\prod_{1\leq j<k\leq 3}\Gamma(b_j+b_k)}\\
&\quad\times\left|\frac{\Gamma(b_1+\textup ix)\Gamma(b_2+\textup ix)\Gamma(b_3+\textup ix)}{\Gamma(2\textup ix)}\right|^2,
 \end{align*}
where
$(a_1,a_2,a_3)=(1/2,b/4,b/4+1/2)$ and $(b_1,b_2,b_3)=(1/2,b/4+1/2,b/4+1)$,
and conclude that 
\begin{align}
 h_{2n}^{(0)}&=\frac{\textup i b}{2}\,h_n^\text{CDH}\left(\frac 12,\frac b4,\frac b4+\frac 12\right)
\label{hj}=\frac{\textup i b}{2^{2n+1}}\,n!\left(\frac{b+1}2\right)_n\left(\frac{b+2}2\right)_{2n},\\
\notag h_{2n+1}^{(0)}&=\frac{\textup i b(b+1)(b+2)}{16}\,h_n^\text{CDH}\left(\frac 12,\frac b4+\frac 12,\frac b4+1\right)\\
\notag &=\frac{\textup i b(b+1)(b+2)}{2^{2n+4}}\,n!\left(\frac{b+3}2\right)_n\left(\frac{b+4}2\right)_{2n}.
\end{align}
Moreover, it follows from Lemma \ref{mml}  that
\begin{align*}h_n^{(1)}&=\frac{e^{\textup i\pi/4}n!(b)_{n+1}}{2^{3n+3/2}},\\
\langle p_m^{(1)},p_n^{(1)}\rangle_{-1}&=(-1)^{m+n+1}e^{-\textup i\pi/4}\frac{b(b+1)_m(b+1)_n}{2^{m+n+3/2}}\,{}_2F_1\left(\begin{matrix}-m,-n\\b+1 \end{matrix};\frac 12\right). \end{align*}
In the resulting instance of \eqref{gfd},
we replace $b$ by $b-1$ and simplify to obtain the following result.

\begin{theorem}\label{ht}
The following determinant evaluation holds:
\begin{multline*}
\det_{0\leq m,n\leq N-1}\left({m!}{(b)_m}\delta_{mn}+\textup i
2^{(m+n)/2}(b)_m(b)_n\,{}_2F_1\left(\begin{matrix}-m,-n\\b \end{matrix};\frac 12\right)\right)=e^{\frac{\textup i\pi N}4}2^{\frac{N(2N-1)}2}\\
\times\left(\frac{b(b+1)}{4}\right)^{[N/2]}\prod_{n=0}^{[(N-1)/2]}n!\left(\frac{b}2\right)_n\left(\frac{b+1}2\right)_{2n}\prod_{n=0}^{[(N-2)/2]}n!\left(\frac{b+2}2\right)_n\left(\frac{b+3}2\right)_{2n}.
\end{multline*}
\end{theorem}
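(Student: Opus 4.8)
The plan is to specialize the master identity \eqref{gfd} to the three bilinear forms $\langle\cdot,\cdot\rangle_k$ determined by the weights $w_{\pm1},w_0$ displayed above, following the same scheme as in Theorem~\ref{ct2}. The first point to settle is that $\langle\cdot,\cdot\rangle_0$ actually admits a monic orthogonal system, so that \eqref{gfd} applies rather than the block-diagonal substitute \eqref{ofd}. Writing $\Gamma(\frac b2+2\textup ix+1)\Gamma(\frac b2-2\textup ix)=2\textup i(x-\textup ib/4)\,|\Gamma(\frac b2+2\textup ix)|^2$, one sees that $w_0$ equals $\sinh(\pi x)$ times an even factor times the linear factor $(x-\textup ib/4)$; hence $(x+\textup ib/4)w_0(x)$ is odd while $\langle 1,1\rangle_0=\int_{-\infty}^\infty w_0\neq0$. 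As in Theorem~\ref{ct2} I therefore seek $p_{2n}^{(0)}(x)=s_n(x^2)$ and $p_{2n+1}^{(0)}(x)=(x+\textup ib/4)\,t_n(x^2)$; the cross terms $\langle p_{2n}^{(0)},p_{2k+1}^{(0)}\rangle_0$ vanish by parity, and the two diagonal blocks reduce to orthogonality on the positive half-line against the even weights $\frac{x}{x-\textup ib/4}\,w_0$ and $x(x+\textup ib/4)\,w_0$.

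The next step identifies $s_n,t_n$ and their norms. The two factorizations recorded above exhibit these even weights as continuous dual Hahn weights with parameters $(\frac12,\frac b4,\frac b4+\frac12)$ and $(\frac12,\frac b4+\frac12,\frac b4+1)$, so $s_n,t_n$ are the corresponding continuous dual Hahn polynomials and the norms $h_{2n}^{(0)}$, $h_{2n+1}^{(0)}$ follow from $h_n^{\text{CDH}}$ together with the prefactors $\textup ib/2$ and $\textup ib(b+1)(b+2)/16$, yielding \eqref{hj} and its odd-index companion. On the $w_{\pm1}$ side I invoke Lemma~\ref{mml} with $t=2$ and $\phi=\pi/4$, so that $e^{2\phi t x}=e^{\pi x}$ and $\frac{1}{4\sin^2\phi}=\frac12$; after correcting for the factor $b/2^{3/2}$ between the $\Gamma(b+1)$-normalization of the Lemma and the $\Gamma(b)$-normalization used here and for the extra minus sign carried by $w_{-1}$, equations \eqref{rmo} and \eqref{rme} deliver $p_n^{(1)}$ as a rescaled Meixner--Pollaczek polynomial together with the stated values of $h_n^{(1)}$ and $\langle p_m^{(1)},p_n^{(1)}\rangle_{-1}$, the latter involving the terminating ${}_2F_1(-m,-n;b+1;\frac12)$.

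Substituting these data into \eqref{gfd} gives a determinant with $(m,n)$ entry $h_m^{(1)}\delta_{mn}+\langle p_m^{(1)},p_n^{(1)}\rangle_{-1}$ and value $\prod_{n=0}^{N-1}h_n^{(0)}$. Replacing $b$ by $b-1$ turns $(b)_{m+1}$ into $(b-1)(b)_m$ and $(b+1)_m$ into $(b)_m$, and scaling the $m$th row and column by $(-1)^m2^{3m/2}$, up to a common constant, makes the diagonal entry $m!(b)_m$ and the off-diagonal entry $\textup i\,2^{(m+n)/2}(b)_m(b)_n\,{}_2F_1(-m,-n;b;\frac12)$, exactly as in the statement. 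I expect the genuine effort to lie entirely in this concluding bookkeeping: the row/column scaling multiplies the determinant by a power of $2$ and a phase $e^{-\textup i\pi N/4}$, the factors $\textup i$ in the $h_n^{(0)}$ contribute $\textup i^N=e^{\textup i\pi N/2}$, and one must check that these combine with $\prod_{n=0}^{N-1}h_n^{(0)}$, separated into its even- and odd-index subproducts, to collapse precisely to the prefactor $e^{\textup i\pi N/4}2^{N(2N-1)/2}(b(b+1)/4)^{[N/2]}$ times the two displayed products of Pochhammer symbols.
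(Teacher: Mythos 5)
Your proposal is correct and follows essentially the same route as the paper: the same weights split as in the proof of Theorem~\ref{ct2}, the same parity construction $p_{2n}^{(0)}(x)=s_n(x^2)$, $p_{2n+1}^{(0)}(x)=(x+\textup ib/4)t_n(x^2)$ with continuous dual Hahn parameters $(\tfrac12,\tfrac b4,\tfrac b4+\tfrac12)$ and $(\tfrac12,\tfrac b4+\tfrac12,\tfrac b4+1)$ and prefactors $\textup ib/2$, $\textup ib(b+1)(b+2)/16$, the same specialization $t=2$, $\phi=\pi/4$ of Lemma~\ref{mml} (including the normalization factor $b/2^{3/2}$ and the sign on $w_{-1}$), and the same final substitution $b\mapsto b-1$ with row/column rescaling in \eqref{gfd}. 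All the checkable details (in particular the values \eqref{hj}, $h_n^{(1)}$, and $\langle p_m^{(1)},p_n^{(1)}\rangle_{-1}$, and the phase accounting $e^{-\textup i\pi N/4}\cdot\textup i^N=e^{\textup i\pi N/4}$) agree with the paper.
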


Next, we choose $w_1$ as in \eqref{fcw} but replace $w_{-1}$ by its negative. Then,
$$w_0(x)=\frac{2^{b/2}}{\pi\Gamma(b)}\, \sinh(\pi x)\left|\Gamma\left(\frac b2+2\textup ix\right)\right|^2.$$
Exactly  as in the proof of Theorem \ref{ct3}, we  find that
 \eqref{ofd} holds with
$$c_n=\frac 1{2\textup i}\,h_{2n}^{(0)}\bigg|_{\text{ as in }\eqref{hj}}=
\frac{b}{2^{2n+2}}\,n!\left(\frac{b+1}2\right)_n\left(\frac{b+2}2\right)_{2n},
 $$
$h_n^{(1)}$ as in \eqref{hh} and $\langle p_m^{(1)},p_n^{(1)}\rangle_{-1}$ as in \eqref{sh} apart from a change of sign. After simplification, we obtain the following determinant evaluation. 

\begin{theorem}\label{xt}
When $N$ is even,
\begin{multline*}
\det_{0\leq m,n\leq N-1}\left({m!}{(b)_m}\delta_{mn}-
2^{(m+n)/2}(b)_m(b)_n\,{}_2F_1\left(\begin{matrix}-m,-n\\b \end{matrix};\frac 12\right)\right)\\
=(-1)^{\frac N2}2^{\frac{N(2N-3)}2}b^N
\prod_{n=0}^{(N-2)/2}\left(n!\left(\frac{b+1}2\right)_n\left(\frac{b+2}2\right)_{2n}
\right)^2,\end{multline*}
whereas if  $N$ is odd, the determinant vanishes.
\end{theorem}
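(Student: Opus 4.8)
The plan is to run the machinery of \S\ref{cvs} in the continuous dual Hahn setting, exactly as in the proof of Theorem \ref{ct3}. Since $w_{-1}$ has been negated, the weight here is the odd function $w_0(x)=\frac{2^{b/2}}{\pi\Gamma(b)}\sinh(\pi x)|\Gamma(b/2+2\textup ix)|^2$, so $\langle 1,1\rangle_0=0$ and no monic orthogonal system exists for $w_0$. Thus \eqref{gfd} is unavailable, and I would instead invoke the substitute identity \eqref{ofd}, which expresses the relevant determinant as $(-1)^{N/2}(c_0\dotsm c_{(N-2)/2})^2$ when $N$ is even and as $0$ when $N$ is odd.

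All three ingredients needed in \eqref{ofd} are already at hand. First, since $w_1$ is unchanged from \eqref{fcw}, the polynomials $p_n^{(1)}$ and their norms $h_n^{(1)}$ are exactly those in \eqref{hh}, while negating $w_{-1}$ simply flips the sign of $\langle p_m^{(1)},p_n^{(1)}\rangle_{-1}$ relative to \eqref{sh}. Second, the constants $c_n$ arise from orthogonal polynomials for $xw_0(x)$: using $\Gamma(b/2+2\textup ix+1)=(b/2+2\textup ix)\Gamma(b/2+2\textup ix)$ together with $b/2+2\textup ix=2\textup i(x-\textup ib/4)$, one sees that $xw_0(x)$ here equals $\frac 1{2\textup i}$ times the even function $\frac{x}{x-\textup ib/4}\,w_0(x)$ that was diagonalized in the proof of Theorem \ref{ht}. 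Hence $c_n=\frac 1{2\textup i}h_{2n}^{(0)}$ with $h_{2n}^{(0)}$ as in \eqref{hj}, giving the stated $c_n=\frac{b}{2^{2n+2}}n!(\tfrac{b+1}2)_n(\tfrac{b+2}2)_{2n}$.

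With these substitutions the left-hand side of \eqref{ofd} becomes the matrix with diagonal entries $\frac{n!(b)_n}{2^{3n+1}}$ and off-diagonal entries $-\frac{(-1)^{m+n}(b)_m(b)_n}{2^{m+n+1}}\,{}_2F_1(-m,-n;b;\tfrac 12)$. To reach the normalization of the theorem I would multiply the $n$th row and the $n$th column by $\alpha_n=(-1)^n2^{(3n+1)/2}$; since $\alpha_m\alpha_n=(-1)^{m+n}2^{3(m+n)/2+1}$, this turns the off-diagonal entry into $-2^{(m+n)/2}(b)_m(b)_n\,{}_2F_1(-m,-n;b;\tfrac 12)$ and the diagonal into $n!(b)_n$, which is precisely the matrix of the theorem. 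The determinant is thereby multiplied by $\big(\prod_{n=0}^{N-1}\alpha_n\big)^2=2^{N(3N-1)/2}$.

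Combining this prefactor with the right-hand side of \eqref{ofd} finishes the argument: for $N$ odd the determinant vanishes, while for $N$ even one obtains $2^{N(3N-1)/2}(-1)^{N/2}(c_0\dotsm c_{(N-2)/2})^2$, and inserting the explicit $c_n$ and collecting powers of $2$ yields the stated formula. There is no conceptual obstacle, the reasoning being parallel to Theorem \ref{ct3}; the only genuine labor is the bookkeeping of the exponent of $2$ (the $\tfrac{N(3N-1)}2$ from the rescaling and the $-\tfrac{N(N+2)}2$ from $\prod c_n^2$ combine to $\tfrac{N(2N-3)}2$) and the check that $\prod c_n^2$ reproduces the claimed product of Pochhammer symbols.
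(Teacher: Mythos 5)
Your proposal is correct and takes essentially the same route as the paper: the paper likewise invokes the substitute identity \eqref{ofd}, identifies $xw_0(x)$ with $\frac{1}{2\textup i}$ times the even weight diagonalized in the proof of Theorem \ref{ht} to get $c_n=\frac 1{2\textup i}h_{2n}^{(0)}$ as in \eqref{hj}, and keeps $h_n^{(1)}$ from \eqref{hh} with the sign-flipped \eqref{sh}. Your bookkeeping also checks out (the rescaling by $\alpha_n=(-1)^n2^{(3n+1)/2}$ contributes $2^{N(3N-1)/2}$, and $\prod c_n^2$ contributes $b^N2^{-N(N+2)/2}$ times the stated Pochhammer products, giving $2^{N(2N-3)/2}$).
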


We now turn to determinant evaluations of the form \eqref{td}. Let
$$w_{\pm 1}(x)=\pm\frac{3}{4\pi\Gamma(b)}\,e^{\pm 2\pi x}\Gamma\left(\frac b2+3\textup ix+1\right)\Gamma\left(\frac b2-3\textup ix\right),
 $$
$$w_{0}(x)=\frac{3}{2\pi\Gamma(b)}\,\sinh(2\pi x)\Gamma\left(\frac b2+3\textup ix+1\right)\Gamma\left(\frac b2-3\textup ix\right).
 $$
We then have
\begin{align*}\frac{x}{x-\textup ib/6}\,w_0(x)
&=\frac{\textup i 3^{1/2}b}{2}\frac{1}{4\pi\prod_{1\leq j<k\leq 3}\Gamma(a_j+a_k)}\\
&\quad\times\left|\frac{\Gamma(a_1+\textup ix)\Gamma(a_2+\textup ix)\Gamma(a_3+\textup ix)}{\Gamma(2\textup ix)}\right|^2,\\
x\left(x+\frac{\textup ib}6\right)w_0(x)
&=\frac{\textup i b(b+1)(b+2)}{2\cdot 3^{3/2}}\frac{1}{4\pi\prod_{1\leq j<k\leq 3}\Gamma(b_j+b_k)}\\
&\quad\times\left|\frac{\Gamma(b_1+\textup ix)\Gamma(b_2+\textup ix)\Gamma(b_3+\textup ix)}{\Gamma(2\textup ix)}\right|^2,
 \end{align*}
where $(a_1,a_2,a_3)=(b/6,b/6+1/3,b/6+2/3)$, $(b_1,b_2,b_3)=(b/6+1/3,b/6+2/3,b/6+1)$. As before, it follows that
\begin{align}
\label{hx} h_{2n}^{(0)}&=\frac{\textup i 3^{1/2}b}{2}\,h_n^\text{CDH}\left(\frac b6,\frac b6+\frac 13,\frac b6+\frac 23\right)=\frac{\textup i b}{2\cdot 3^{3n-1/2}}\,n!(b+1)_{3n},\\
\notag h_{2n+1}^{(0)}&=\frac{\textup i b(b+1)(b+2)}{2\cdot 3^{3/2}}\,h_n^\text{CDH}
\left(\frac b6+\frac 13,\frac b6+\frac 23,\frac b6+1\right)\\
\notag &=\frac{\textup i b(b+1)(b+2)}{2\cdot 3^{3n+3/2}}\,n!(b+3)_{3n}
\end{align}
and, using  Lemma \ref{mml},
\begin{align*}
h_n^{(1)}&=\frac{e^{\textup i\pi/3}n!(b)_{n+1}}{2\cdot 3^{2n}}, \\
\langle p_m^{(1)},p_n^{(1)}\rangle_{-1}&=(-1)^{m+n+1}e^{-\textup i\pi/3}\frac{b(b+1)_m(b+1)_n}{2\cdot 3^{(m+n)/2}}\,{}_2F_1\left(\begin{matrix}-m,-n\\b+1 \end{matrix};\frac 13\right).\end{align*}
After replacing $b$ by $b-1$,
 \eqref{gfd} can be simplified to the following form.

\begin{theorem}\label{yt}
The following determinant evaluation holds:
\begin{multline*}
\det_{0\leq m,n\leq N-1}\left({m!}{(b)_m}\delta_{mn}+e^{\textup i\pi/3}
3^{(m+n)/2}(b)_m(b)_n\,{}_2F_1\left(\begin{matrix}-m,-n\\b \end{matrix};\frac 13\right)\right)\\
=e^{\frac{\textup i\pi N}6}3^{\frac{N(N+1)}4}
\left(\frac{b(b+1)}{\sqrt 3}\right)^{[N/2]}\prod_{n=0}^{[(N-1)/2]}n!(b)_{3n}\prod_{n=0}^{[(N-2)/2]}n!(b+2)_{3n}.
\end{multline*}
\end{theorem}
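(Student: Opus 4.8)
The plan is to feed the data assembled just above the statement into the general identity \eqref{gfd} and then to massage the resulting evaluation into the asserted form by the substitution $b\mapsto b-1$ together with a single row/column rescaling. The hypotheses of \eqref{gfd} are in place: the sign convention $w_{\pm1}=\pm(\dots)$ makes $w_1+w_{-1}$ collapse to the $\sinh(2\pi x)$ weight $w_0$, so the three forms obey \eqref{sps}; and since $\tfrac b2+3\textup ix=3\textup i(x-\textup ib/6)$, the product $(x+\textup ib/6)w_0(x)$ is odd, whence monic orthogonal polynomials exist in the split shape $p_{2n}^{(0)}(x)=s_n(x^2)$, $p_{2n+1}^{(0)}(x)=(x+\textup ib/6)t_n(x^2)$, exactly as for Theorem \ref{ct2}. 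Because $\langle1,1\rangle_0\neq0$ we are in the regime where \eqref{gfd} applies rather than the degenerate \eqref{ofd}. The continuous dual Hahn identifications then deliver $h_{2n}^{(0)}$ and $h_{2n+1}^{(0)}$ as in \eqref{hx} and the following line, while Lemma \ref{mml} with $t=3$ and $\phi=\pi/3$ (so that $e^{2\phi tx}=e^{2\pi x}$ matches $w_1$, $1/(4\sin^2\phi)=\tfrac13$, and $\tan\phi/t=3^{-1/2}$) supplies $h_n^{(1)}$ from \eqref{rmo} and $\langle p_m^{(1)},p_n^{(1)}\rangle_{-1}$ from \eqref{rme}; the factor $b/2$ appears as the ratio of the prefactor $3/(4\pi\Gamma(b))$ to the lemma's $3/(2\pi\Gamma(b+1))$, and the extra sign in $\langle\cdot,\cdot\rangle_{-1}$ reflects that $w_{-1}$ is the negative of a Meixner--Pollaczek weight.

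With these ingredients in hand, \eqref{gfd} is an explicit identity $\det_{0\le m,n\le N-1}\bigl(h_m^{(1)}\delta_{mn}+\langle p_m^{(1)},p_n^{(1)}\rangle_{-1}\bigr)=\prod_{n=0}^{N-1}h_n^{(0)}$. First I would replace $b$ by $b-1$ throughout, using $(b-1)_{m+1}=(b-1)(b)_m$, $(b-1+1)_{3n}=(b)_{3n}$, and $(b-1+3)_{3n}=(b+2)_{3n}$; this already installs the two Pochhammer towers $\prod n!(b)_{3n}$ and $\prod n!(b+2)_{3n}$ of the claim. To convert the matrix $A$ so obtained into the matrix $M$ of the theorem, I would multiply the $m$th row and the $m$th column by $\lambda_m=C(-3)^m$, where $C^2=-2e^{2\textup i\pi/3}/(b-1)$.

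The decisive point is that this one scaling normalises both parts of the entry simultaneously. The off-diagonal ${}_2F_1$ term forces $\lambda_m\lambda_n\propto(-3)^{m+n}$, fixing the shape $\lambda_m=C(-3)^m$; the diagonal $\delta$-term then imposes a second condition, which is satisfied precisely because of the coincidence $e^{2\textup i\pi/3}=-e^{-\textup i\pi/3}$ among sixth roots of unity, reconciling the diagonal phase $e^{\textup i\pi/3}$ with the off-diagonal phase $e^{-\textup i\pi/3}$ through the same constant $C$. Consequently $\det M=\bigl(\prod_{m=0}^{N-1}\lambda_m\bigr)^2\det A=C^{2N}3^{N(N-1)}\prod_{n=0}^{N-1}h_n^{(0)}\big|_{b\to b-1}$, since $(-3)^{N(N-1)}=3^{N(N-1)}$.

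It remains to evaluate the product, which I would split into its even- and odd-index parts and simplify by summing the two arithmetic progressions occurring in the exponents of $3$, namely $\sum(-3n+\tfrac12)$ and $\sum(-3n-\tfrac32)$, and collecting the factors of $3$, $(b-1)$, $b$, $b+1$, the powers of $\textup i$ and the phases. For $N=2K$ this collapses to $(-1)^Ke^{4\textup i\pi K/3}3^{K^2}(b(b+1))^K$ times the two Pochhammer products; one then checks $(-1)^Ke^{4\textup i\pi K/3}=e^{\textup i\pi K/3}=e^{\textup i\pi N/6}$ (as $e^{2\textup i\pi K}=1$) and that $3^{K^2}(b(b+1))^K$ equals $3^{N(N+1)/4}(b(b+1)/\sqrt3)^{[N/2]}$, giving exactly the theorem's prefactor; the odd case $N=2K+1$ runs in parallel, with the floors $[(N-1)/2]$, $[(N-2)/2]$, $[N/2]$ merely adjusting the number of factors. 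The main obstacle is this final bookkeeping: one must simultaneously track a quadratic power of $3$, an $N$-dependent sixth root of unity, and three separate Pochhammer towers, and one must treat $N$ even and odd separately because of the floor functions. Nothing here is deep, but the emergence of the clean constant $e^{\textup i\pi N/6}3^{N(N+1)/4}(b(b+1)/\sqrt3)^{[N/2]}$ rests entirely on the sixth-root coincidence that makes the row/column rescaling consistent to begin with.
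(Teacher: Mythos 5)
Your proposal is correct and follows essentially the same route as the paper: the same weights $w_{\pm1}$, $w_0$ with the odd factor $(x+\textup ib/6)w_0(x)$ handled as in Theorem \ref{ct2}, the same continuous dual Hahn identifications giving \eqref{hx} and its companion, the same application of Lemma \ref{mml} with $t=3$, $\phi=\pi/3$ (including the prefactor ratio $b/2$ and the sign from $w_{-1}$), and the same substitution $b\mapsto b-1$ into \eqref{gfd}. Your explicit rescaling $\lambda_m=C(-3)^m$ with $C^2=-2e^{2\textup i\pi/3}/(b-1)$, whose consistency rests on $e^{2\textup i\pi/3}=-e^{-\textup i\pi/3}$, is exactly the simplification the paper leaves implicit, and your bookkeeping (I checked both parities of $N$, and the case $N=1$ directly) reproduces the stated prefactor $e^{\textup i\pi N/6}3^{N(N+1)/4}\left(b(b+1)/\sqrt3\right)^{[N/2]}$.
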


Finally, we choose
$$w_{\pm 1}(x)=\pm\frac{3}{4\pi\Gamma(b)}\,e^{\pm 2\pi x}\left|\Gamma\left(\frac b2+3\textup ix\right)\right|^2,
 $$
$$w_{0}(x)=\frac{3}{2\pi\Gamma(b)}\,\sinh(2\pi x)\left|\Gamma\left(\frac b2+3\textup ix\right)\right|^2.
 $$
As before, we find that
 \eqref{ofd} holds with
 \begin{align*}c_n&=\frac 1{3\textup i}\,h_{2n}^{(0)}\bigg|_{\text{ as in }\eqref{hx}}=\frac{b}{2\cdot 3^{3n+1/2}}\,n!(b+1)_{3n},\\
 h_n^{(1)}&=\frac{1}{2\cdot 3^{2n}}\,n!(b)_n,\\
 \langle p_m^{(1)},p_n^{(1)}\rangle_{-1}&=\frac{(-1)^{m+n+1}(b)_m(b)_n}{2\cdot 3^{(m+n)/2}}\,{}_2F_1\left(\begin{matrix}-m,-n\\b \end{matrix};\frac 13\right),
 \end{align*}
which gives the following identity after simplification.

\begin{theorem}\label{tdc}
When $N$ is even,
\begin{multline*}
\det_{0\leq m,n\leq N-1}\left({m!}{(b)_m}\delta_{mn}-
3^{(m+n)/2}(b)_m(b)_n\,{}_2F_1\left(\begin{matrix}-m,-n\\b \end{matrix};\frac 13\right)\right)\\
=(-1)^{N/2}3^{N^2/4}b^N\prod_{n=0}^{(N-2)/2}\big(n!(b+1)_{3n}\big)^2
\end{multline*}
whereas if $N$ is odd the determinant vanishes.
\end{theorem}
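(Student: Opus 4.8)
The plan is to follow the proof of Theorem~\ref{ct3} step by step, with Theorem~\ref{yt} playing the role that Theorem~\ref{ct2} played there, since the present weights have exactly the same structure. The decisive feature is that $w_0$ is \emph{odd}, so $\langle1,1\rangle_0=0$ and there is no orthogonal system for $w_0$; hence \eqref{gfd} does not apply and one must use the odd-weight substitute \eqref{ofd}. I would therefore reduce everything to three explicit quantities: the constants $c_n$ attached to the even weight $xw_0(x)$, the diagonal norms $h_n^{(1)}$, and the off-diagonal pairings $\langle p_m^{(1)},p_n^{(1)}\rangle_{-1}$. Once these are in hand, \eqref{ofd} gives the determinant up to a rescaling of rows and columns.

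To obtain the $c_n$ without fresh work, I would relate $xw_0(x)$ to a weight already diagonalized in Theorem~\ref{yt}. Using $\Gamma(\tfrac b2+3\textup ix+1)=3\textup i(x-\tfrac{\textup ib}6)\,\Gamma(\tfrac b2+3\textup ix)$, one sees that the odd weight of Theorem~\ref{yt}, call it $\widetilde w_0$, equals $3\textup i(x-\textup ib/6)\,w_0$, so that
$$xw_0(x)=\frac1{3\textup i}\,\frac{x}{x-\textup ib/6}\,\widetilde w_0(x).$$
The right-hand factor was already identified in Theorem~\ref{yt} as a continuous dual Hahn weight, so the monic polynomials orthogonal against $xw_0(x)$ coincide with the even-index polynomials there, and $c_n=\frac1{3\textup i}\,h_{2n}^{(0)}$ with $h_{2n}^{(0)}$ taken from \eqref{hx}; this yields $c_n=\frac{b}{2\cdot3^{3n+1/2}}\,n!\,(b+1)_{3n}$.

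For the remaining two quantities I would argue as in Theorem~\ref{at}, but track the larger exponent in $e^{\pm2\pi x}$. After the substitution $x\mapsto x/3$ the weights $w_{\pm1}$ are, up to the sign of $w_{-1}$ and an overall factor $\tfrac12$, the Meixner--Pollaczek weights \eqref{mpo} with $\lambda=b/2$ and $\phi=\tfrac\pi2\pm\tfrac\pi3$ (the exponent $\pm2\pi x$ forces $2\phi-\pi=\pm\tfrac{2\pi}3$). Since $2\sin(\tfrac\pi2\pm\tfrac\pi3)=1$, monic rescaling gives $p_n^{(\pm1)}(x)=\tfrac{n!}{3^n}P_n^{(b/2)}(3x;\tfrac\pi2\pm\tfrac\pi3)$, and \eqref{mpo} produces $h_n^{(1)}=\tfrac{n!(b)_n}{2\cdot3^{2n}}$. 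Expanding $p_n^{(1)}$ in the $p_k^{(-1)}$ basis via \eqref{me}, with $2\sin(\pi/3)=\sqrt3$, and pairing against $\langle\cdot,\cdot\rangle_{-1}$ collapses the double sum by orthogonality to a single ${}_2F_1$ of argument $1/(4\sin^2(\pi/3))=\tfrac13$, together with the overall minus sign from the negation of $w_{-1}$. This is the one genuinely new point: \emph{unlike} Andrews's case, where $\phi=\tfrac\pi2\pm\tfrac\pi6$ makes the argument $1$ and the series Chu--Vandermonde-summable, here the ${}_2F_1(\,;\tfrac13)$ is non-summable and must survive in the answer, producing the stated $\langle p_m^{(1)},p_n^{(1)}\rangle_{-1}$.

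Finally I would feed the three ingredients into \eqref{ofd}. For odd $N$ the determinant vanishes outright; for even $N$ it equals $(-1)^{N/2}(c_0\cdots c_{(N-2)/2})^2$. Multiplying the $n$th row and the $n$th column each by $(-1)^n\sqrt2\,3^n$ clears the factors $\tfrac12$, the half-integer powers of $3$, and the sign $(-1)^{m+n+1}$, converting the matrix to the one in the statement and multiplying the determinant by $\prod_{n=0}^{N-1}2\cdot3^{2n}=2^N3^{N(N-1)}$. Combining this with the product of the $c_n^2$, the powers of $2$ cancel and the powers of $3$ collapse via $N(N-1)-\tfrac14N(3N-4)=N^2/4$, reproducing the claimed evaluation. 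I expect no serious obstacle beyond the careful bookkeeping of these constants; the only conceptual step is recognizing the Meixner--Pollaczek angle that forces the non-summable argument $\tfrac13$.
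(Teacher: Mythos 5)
Your proposal is correct and follows the paper's own route essentially verbatim: the odd-weight substitute \eqref{ofd}, the identification $xw_0(x)=\frac1{3\textup i}\frac{x}{x-\textup ib/6}\widetilde w_0(x)$ giving $c_n=\frac1{3\textup i}h_{2n}^{(0)}$ from \eqref{hx}, the Meixner--Pollaczek data at $\phi=\frac\pi2\pm\frac\pi3$ yielding $h_n^{(1)}$ and the non-summable ${}_2F_1(\,;\frac13)$ pairing via \eqref{me}, and the final row/column rescaling by $(-1)^n\sqrt2\,3^n$. All the constants you compute ($c_n$, $h_n^{(1)}$, the sign $(-1)^{m+n+1}$, and the exponent bookkeeping $N(N-1)-\frac14N(3N-4)=N^2/4$) agree with the paper.
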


It may be instructive to summarize the results obtained so far. 
We have considered 
 weight functions  
$$w_{\pm 1}(x)=\frac{l(2\cos(\pi k/2l))^b}{4\pi\Gamma(b)}\,(\pm 1)^\delta e^{\pm k\pi x}\Gamma\left(\frac b2+l\textup ix+\varepsilon\right)\Gamma\left(\frac b2-l\textup ix\right), $$
which are normalized so that $w_0=w_1+w_{-1}$ has total mass $1$ when $\delta=\varepsilon= 0$,
and the parameters are as in the following table:\\[1ex]

\begin{center}
\begin{tabular}{lrrrr}
&$k$&$l$&$\delta$&$\varepsilon$\\
Theorem \ref{at}&1&3&0&0\\
Theorem \ref{ct}&1&3&0&1\\
Theorem \ref{ct2}&1&3&1&1\\
Theorem \ref{ct3}&1&3&1&0\\
\hline
Theorem \ref{hdt}&1&2&0&0\\
Theorem \ref{ht}&1&2&1&1\\
Theorem \ref{xt}&1&2&1&0\\
\hline
Theorem \ref{yt}&2&3&1&1\\
Theorem \ref{tdc}&2&3&1&0
\end{tabular}\ .
\end{center}
\vspace*{1ex}

The case $(k,l,\delta,\varepsilon)=(1,2,0,1)$, which may appear to be missing, merely gives the complex conjugate of
Theorem \ref{ht}.

\section{A $q$-analogue of Andrews's determinant}

To find a $q$-analogue of Andrews's determinant, it is natural
to replace the Wilson polynomials in \eqref{pnn}  by
Askey--Wilson polynomials. More precisely, a natural starting point would be
 to combine the polynomials
$$p_n(x;1,b,bq,bq^2|q^3),\qquad p_n(x;q^3,b,bq,bq^2|q^3) $$
to a single orthogonal system. This is indeed possible, within
 the framework of orthogonal Laurent polynomials on the unit circle.

Throughout this section, we  write
 $$\om=e^{2\pi\textup i/3}.$$

\begin{lemma}\label{all}
For $|q|,|b|<1$, let
$$\langle f,g\rangle_0=\frac{(q^3;q^3)_\infty(b,b^2q;q)_\infty}{(b^3q^3;q^3)_\infty}
\oint f(z)g(z)\frac 1{1+z}\frac{(z^2,z^{-2};q^3)_\infty}{(z,z^{-1};q^3)_\infty
(bz,bz^{-1};q)_\infty}\frac{dz}{2\pi\textup iz},
 $$
with integration over the positively oriented unit circle. Then, the Laurent polynomials 
$$p_{2n}^{(0)}(z)=\frac 1{(b^3q^{3n};q^3)_n}\,p_n\left(\frac{z+z^{-1}}2;1,b,bq,bq^2|q^3\right), $$
$$p_{2n+1}^{(0)}(z)=\frac {z-1}{(b^3q^{3n+3};q^3)_n}\,p_n\left(\frac{z+z^{-1}}2;q^3,b,bq,bq^2|q^3\right) $$
satisfy the orthogonality relations
$$\langle p_m^{(0)},p_n^{(0)}\rangle_0=h_n^{(0)}\delta_{mn}, $$
where
\begin{subequations}\label{aln}
\begin{align}
\label{he}
h_{2n}^{(0)}&=\frac{(q^3,b^3;q^3)_n(b,b^2q;q)_{3n}}{(b^3,b^3q^3;q^3)_{2n}}, \\
\label{ho}h_{2n+1}^{(0)}&=-\frac{(1-b)(1-bq^2)}{(1-\om bq)(1-\om^2 bq)}\frac{(q^3,b^3q^3;q^3)_n(bq^3,b^2q;q)_{3n}}{(b^3q^3,b^3q^6;q^3)_{2n}}. \end{align}
\end{subequations}
\end{lemma}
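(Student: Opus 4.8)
The plan is to recognize the proposed orthogonality as a disguised Askey--Wilson orthogonality, adapted to the unit-circle (Laurent polynomial) setting, and then reduce the computation of $h_n^{(0)}$ to the known Askey--Wilson norms $h_n^{\text{AW}}$ from \eqref{awo}. The essential structural observation is that the weight
$$\frac{1}{1+z}\,\frac{(z^2,z^{-2};q^3)_\infty}{(z,z^{-1};q^3)_\infty(bz,bz^{-1};q)_\infty}$$
should decompose, under the substitutions $z\mapsto z^2$ in the cube-root variables, so that the even-indexed Laurent polynomials $p_{2n}^{(0)}$ become Askey--Wilson polynomials with parameters $(1,b,bq,bq^2)$ and base $q^3$, while the odd-indexed ones acquire the extra factor $z-1$ and correspond to parameters $(q^3,b,bq,bq^2)$. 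This is the circle-analogue of the even/odd splitting used in \S\ref{cs}, where $w_0$ even let us write $p_{2n}^{(0)}(x)=q_n(x^2)$ and $p_{2n+1}^{(0)}(x)=x\,r_n(x^2)$.

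First I would verify that $\langle\cdot,\cdot\rangle_0$ is well defined despite the parameter $a_1=1$ in the Askey--Wilson weight: exactly as noted after \eqref{awo}, the double zero of $(z^2,z^{-2};q^3)_\infty$ at $z=1$ cancels the double pole coming from $(z,z^{-1};q^3)_\infty$, so the integrand is regular and the formula makes sense for $|q|,|b|<1$. Next I would establish the orthogonality and norm for the even part by matching the integral against \eqref{awo} with $(a_1,a_2,a_3,a_4)=(1,b,bq,bq^2)$ and $q$ replaced by $q^3$; the prefactor in the definition of $\langle\cdot,\cdot\rangle_0$ is precisely the normalization appearing there, and the leading-coefficient normalization $1/(b^3q^{3n};q^3)_n$ is chosen so that $p_{2n}^{(0)}$ is the monic Askey--Wilson polynomial rescaled to a Laurent polynomial. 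Reading off $h_n^{\text{AW}}(1,b,bq,bq^2;q^3)$ and simplifying the $q$-shifted factorials should yield \eqref{he}. For the odd part I would similarly match against \eqref{awo} with $(a_1,a_2,a_3,a_4)=(q^3,b,bq,bq^2)$, after absorbing the factor $(z-1)$ and its conjugate into a modified weight; the sign and the rational prefactor $-(1-b)(1-bq^2)/((1-\om bq)(1-\om^2 bq))$ in \eqref{ho} will emerge from evaluating this extra factor at the relevant points, using $(1-bq)(1-\om bq)(1-\om^2 bq)=1-b^3q^3$ to handle the cube-root structure.

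The genuinely novel point—and the main obstacle—is that the even and odd families are being combined into a \emph{single} orthogonal system of Laurent polynomials, so one must also check the cross orthogonality $\langle p_{2m}^{(0)},p_{2n+1}^{(0)}\rangle_0=0$ between the two families, which is not automatic from the two separate Askey--Wilson orthogonalities. Here the factor $1/(1+z)$ in the weight is crucial: it breaks the evenness $z\leftrightarrow z^{-1}$ just enough that the Laurent monomials of even and odd degree pair correctly, so that $\{p_0^{(0)},p_1^{(0)},\dots\}$ forms an orthogonal basis in the sense appropriate to Laurent polynomials on the circle (degrees running $0,-1,1,-2,2,\dots$ or the analogous ordering). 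I would make this precise by checking that $\langle z^j,z^k\rangle_0$ has the triangularity needed, equivalently that $(z-1)p_n(\cdot;q^3,\dots)$ is orthogonal to all $p_m(\cdot;1,\dots)$ with $m\le n$; this should follow from a symmetry/substitution argument relating the two weights, together with the vanishing forced by the $(z^2,z^{-2};q^3)_\infty$ factor. Once these three ingredients—well-definedness, the two norm evaluations, and the cross orthogonality—are in place, the stated formulas \eqref{aln} follow immediately.
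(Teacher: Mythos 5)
Your plan has the same skeleton as the paper's proof: check well-definedness at $a_1=1$ via the double-zero cancellation, identify the even family with the Askey--Wilson system $p_n(\cdot;1,b,bq,bq^2|q^3)$ and the odd family with $p_n(\cdot;q^3,b,bq,bq^2|q^3)$, and produce the prefactor in \eqref{ho} from $(1-bq)(1-\om bq)(1-\om^2 bq)=1-b^3q^3$. But there is a concrete missing idea, and it sits exactly at the step you call ``the main obstacle.'' The paper handles all three parity cases at once by symmetrizing the integrand,
$$\oint f(z)\,\frac{dz}{2\pi\textup iz}=\frac 12\oint \left(f(z)+f(z^{-1})\right)\frac{dz}{2\pi\textup iz},$$
and then using the three elementary identities
$$\frac 1{1+z}+\frac 1{1+z^{-1}}=1,\qquad \frac{z-1}{1+z}+\frac{z^{-1}-1}{1+z^{-1}}=0,\qquad \frac{(z-1)^2}{1+z}+\frac{(z^{-1}-1)^2}{1+z^{-1}}=-(1-z)(1-z^{-1}).$$
Nothing in your proposal plays the role of this device, and without it several of your stated matchings fail in detail.

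Specifically: (i) in the even--even case the weight in $\langle\cdot,\cdot\rangle_0$ is \emph{not} the Askey--Wilson weight --- it carries the extra factor $1/(1+z)$, and the prefactor is \emph{twice} that of \eqref{awo}; the first identity together with the $\frac 12$ from symmetrization reconciles both discrepancies, so your claim that the prefactor ``is precisely the normalization appearing there'' is off by exactly what symmetrization supplies. (ii) The cross-orthogonality you treat as the hard part is in fact the easiest case: the second identity kills $\langle p_{2m}^{(0)},p_{2n+1}^{(0)}\rangle_0$ for \emph{all} $m,n$ in one line, with no triangularity argument; your suggested mechanism, ``vanishing forced by the $(z^2,z^{-2};q^3)_\infty$ factor,'' is a red herring, since that factor's only job is to cancel the double pole at $z=1$ coming from $a_1=1$. (iii) In the odd--odd case the bilinear (not sesquilinear) pairing produces $(z-1)^2$, not $(z-1)(z^{-1}-1)$; the third identity is what converts this into $-(1-z)(1-z^{-1})$, and the sign matters: since $(z-1)(z^{-1}-1)=+(1-z)(1-z^{-1})$, the ``factor and its conjugate'' bookkeeping you sketch would get the sign of \eqref{ho} wrong. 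Once $-(1-z)(1-z^{-1})$ is in hand, the rest of your odd-case plan is sound, except that the multiplier does not come from ``evaluating the extra factor at the relevant points'': rather, $(1-z)(1-z^{-1})/(z,z^{-1};q^3)_\infty=1/(q^3z,q^3z^{-1};q^3)_\infty$ turns the normalized $a_1=1$ measure into the normalized $a_1=q^3$ measure up to the constant $(1-b)(1-bq)(1-bq^2)/(1-b^3q^3)$, which your cubic factorization then reduces to the stated coefficient.
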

 
\begin{proof}
In the integral defining $\langle p_m^{(0)},p_n^{(0)}\rangle_0$, write
$$\oint f(z)\,\frac{dz}{2\pi\textup iz}=\frac 12\oint \left(f(z)+f(z^{-1})\right)\frac{dz}{2\pi\textup iz}. $$
When  $m$ and $n$ are both even, the integrand is invariant under $z\mapsto z^{-1}$ apart from the factor
$$\frac 1{1+z}+\frac 1{1+z^{-1}}=1. $$
The integral then reduces to \eqref{awo}, and we obtain the desired orthogonality with
$$h_{2n}^{(0)}=\frac{h_n^{\text{AW}}(1,b,bq,bq^2|bq^3)}{(b^3q^{3n};q^3)_n^2}, $$
which agrees with \eqref{he}.
When  $m$ and $n$ have different parity, we get an integral containing the factor 
$$\frac{z-1}{1+z}+\frac{z^{-1}-1}{1+z^{-1}}=0, $$
so the orthogonality is obvious.
Finally, when $m$ and $n$ are both odd, we encounter the factor
$$\frac{(z-1)^2}{1+z}+\frac{(z^{-1}-1)^2}{1+z^{-1}}=-(1-z)(1-z^{-1}). $$
We now  observe that $(1-z)(1-z^{-1})$ times the normalized orthogonality measure for $p_n(x;1,b,bq,bq^2|q^3)$ is  the corresponding measure for $p_n(x;q^3,b,bq,bq^2|q^3)$, apart from the multiplier
$$\frac{(1-b_2)(1-b_3)(1-b_4)}{1-b_2b_3b_4}\Bigg|_{b_2=b,\,b_3=bq,\,b_4=bq^2}=\frac{(1-b)(1-bq^2)}{(1-\om bq)(1-\om^2bq)}, $$
 and conclude that
$$h_{2n+1}^{(0)}=-\frac{(1-b)(1-bq^2)}{(1-\om bq)(1-\om^2bq)}\frac{h_n^{\text{AW}}(q^3,b,bq,bq^2|bq^3)}{(b^3q^{3n+3};q^3)_n^2}, $$
which agrees with \eqref{ho}.
\end{proof}

Note that  $p_k^{(0)}$ is a linear combination of the first $k+1$ terms in the sequence 
$$1,\,z,\,z^{-1},\,z^2,\,z^{-2},\dots. $$
Moreover, the coefficient of the $(k+1)$st term is $1$. If we let these two properties define a \emph{monic Laurent polynomial of degree $k$}, then 
the discussion leading to 
\eqref{gfd} remains valid if 
 ``polynomial'' is replaced throughout by ``Laurent polynomial''. 

To apply this modified version of \eqref{gfd} we must split the orthogonality measure 
 for $p_n^{(0)}$ in two parts.
This will be achieved by the following version of Watson's quintuple product identity \cite{w}.
The fact that this fundamental result is applicable is a strong indication that we are doing something natural.

\begin{lemma}\label{wl}
The following identity holds:
\begin{align}\notag\frac 1{1+z}\frac{(z^2,z^{-2};q^3)_\infty}{(z,z^{-1};q^3)_\infty}
&=\frac{1-\omega}3\frac{(q;q)_\infty}{(q^3;q^3)_\infty}\\
\label{wq}&\quad\times\left(
{(qz\om,\om^2/z;q)_\infty-\om^2(qz\om^2,\om/z;q)_\infty}
\right).\end{align}
\end{lemma}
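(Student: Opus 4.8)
The plan is to strip both sides down to a clean identity between classical theta functions and then prove that identity by the standard Liouville-type argument. Write $\theta(w)=(w,q/w;q)_\infty$ and $\theta_3(w)=(w,q^3/w;q^3)_\infty$. On the left-hand side, pulling out the leading factors gives $(z,z^{-1};q^3)_\infty=(1-z^{-1})\theta_3(z)$ and $(z^2,z^{-2};q^3)_\infty=(1-z^{-2})\theta_3(z^2)$; since $(1-z^{-2})/(1-z^{-1})=1+z^{-1}$ and $(1+z^{-1})/(1+z)=z^{-1}$, the left-hand side collapses to $z^{-1}\theta_3(z^2)/\theta_3(z)$ (the apparent singularity at $z=-1$ is removable). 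On the right-hand side, the relations $q/(qz\omega)=\omega^2/z$ and $q/(qz\omega^2)=\omega/z$, which use $\omega^3=1$, identify the two products as $\theta(qz\omega)$ and $\theta(qz\omega^2)$. Thus the claim is equivalent to
\[ z^{-1}\theta_3(z^2)=\tfrac{1-\omega}{3}\tfrac{(q;q)_\infty}{(q^3;q^3)_\infty}\,\theta_3(z)\,g(z),\qquad g(z):=\theta(qz\omega)-\omega^2\theta(qz\omega^2), \]
that is, to $H(z)=\theta_3(z^2)$, where $H(z):=z\theta_3(z)\cdot\tfrac{1-\omega}{3}\tfrac{(q;q)_\infty}{(q^3;q^3)_\infty}\,g(z)$.

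Both $H$ and $\theta_3(z^2)$ are holomorphic and not identically zero on $\mathbb{C}^\ast$. Using the quasi-periodicities $\theta(qw)=-w^{-1}\theta(w)$ and $\theta_3(q^3w)=-w^{-1}\theta_3(w)$, a short computation (again with $\omega^3=1$) shows that under $z\mapsto q^3z$ each of $H(z)$ and $\theta_3(z^2)$ is multiplied by the \emph{same} factor $q^{-3}z^{-4}$. Since a nonzero holomorphic function on $\mathbb{C}^\ast$ with this transformation law has exactly four zeros (with multiplicity) in a fundamental annulus for $z\mapsto q^3z$, it suffices to check that $H$ vanishes at the points $z=1,-1,\pm q^{3/2}$, which represent the four zeros of $\theta_3(z^2)$ modulo $z\mapsto q^3z$; then $H/\theta_3(z^2)$ is holomorphic, nowhere zero and $q^3$-periodic, hence constant.

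The zero at $z=1$ comes from the factor $\theta_3(z)$, so the real work is to show $g(-1)=g(q^{3/2})=g(-q^{3/2})=0$. Here I would expand $g$ by the Jacobi triple product: $(q;q)_\infty\theta(qz\omega)=\sum_n(-1)^nq^{\binom{n+1}2}\omega^nz^n$, so the twisted combination has coefficient $\omega^n-\omega^{2n+2}$, which vanishes precisely for $n\equiv1\pmod 3$. This mod-$3$ sieving, which is the true reason Watson's identity enters, leaves a two-part sum over $n\equiv0$ and $n\equiv2$, and substituting $z=-1,\pm q^{3/2}$ and reindexing $m\mapsto-m-c$ in the second part makes it cancel the first. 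The same computation pins down the constant: comparing the derivatives of $H$ and $\theta_3(z^2)$ at the simple zero $z=1$ reduces it to $g(1)$, whose series collapses, by splitting $\mathbb{Z}$ into residues modulo $3$, to the product $(q^3;q^3)_\infty$; together with $(1-\omega)(1-\omega^2)=3$ this yields the constant $1$.

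The main obstacle is the combinatorial bookkeeping in the last step: organizing the triple-product expansion, confirming that exactly the residue class $n\equiv1$ is annihilated, and choosing the reindexing shifts $c$ correctly so that the leftover sums cancel at all three test points. Everything else is routine theta-function manipulation. Indeed, this argument amounts to a self-contained proof of the relevant form of the quintuple product identity, in agreement with the paper's remark that its appearance is a strong indication that the construction is natural.
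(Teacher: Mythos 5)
Your argument is correct, but it follows a genuinely different route from the paper's. The paper's proof is a two-line coefficient comparison: it rewrites the left-hand side as $\frac1z(-z,-q/z;q^3)_\infty(q^3z^2,q^3/z^2;q^6)_\infty$, quotes the quintuple product identity from Gasper--Rahman to obtain the Laurent expansion $\frac1{(q^3;q^3)_\infty}\sum_n(-1)^nq^{\binom{3n}2}z^{3n-1}(1+zq^{3n})$, expands the right-hand side by the triple product, and matches terms --- the ``easily verified'' step being exactly the mod-$3$ sieve you isolate, where $\omega^n-\omega^{2n+2}$ annihilates $n\equiv1\pmod 3$ and $(1-\omega)(1-\omega^2)=3$ fixes the normalization. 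You instead treat the quintuple product as the thing to be proved and run a self-contained Liouville-type argument; I checked that it goes through: both $H(z)$ and $\theta_3(z^2)$ do acquire the multiplier $q^{-3}z^{-4}$ under $z\mapsto q^3z$ (via $\theta(q^3w)=-q^{-3}w^{-3}\theta(w)$ and $\omega^3=1$), the four zero classes $z=\pm1,\pm q^{3/2}$ are the right test points, and after sieving, $(q;q)_\infty g(z)=(1-\omega^2)\sum_m(-1)^mq^{\binom{3m}2}z^{3m-1}(1+q^{3m}z)$, so the reindexings $m\mapsto-m$ (at $z=-1$) and $m\mapsto-(m+1)$ (at $z=\pm q^{3/2}$) produce the predicted cancellations. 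For the constant you can even skip series: $g(1)=\bigl[(1-\omega^2)-\omega^2(1-\omega)\bigr](q\omega,q\omega^2;q)_\infty=2(1-\omega^2)(q\omega,q\omega^2;q)_\infty$, and $(q,q\omega,q\omega^2;q)_\infty=(q^3;q^3)_\infty$ gives $C'g(1)/2=1$, the factor $2$ coming from $\frac{d}{dz}\theta_3(z^2)\big|_{z=1}=2\theta_3'(1)$. The trade-off: the paper's proof is shorter modulo a standard reference, while yours is longer but self-contained and explains structurally, through the multiplier and zero count, why the quintuple-product instance must hold --- consistent with the paper's remark that its appearance signals a natural construction.
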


\begin{proof}
The left-hand side of \eqref{wq} can be expressed as
$$\frac 1 z\,(-z,-q/z;q^3)_\infty(q^3z^2,q^3/z^2;q^6)_\infty. $$
By the quintuple product identity, as given in \cite[Ex.\ 5.6]{gr}, 
the Laurent expansion of this function is 
\begin{equation}\label{la}\frac 1{(q^3;q^3)_\infty}\sum_{n=-\infty}^\infty(-1)^nq^{\binom{3n}2}z^{3n-1}(1+zq^{3n}). \end{equation}
On the other hand, by the triple product identity \cite[Eq.\ (1.6.1)]{gr},
the right-hand side of \eqref{wq} has Laurent expansion
\begin{equation}\label{lb}\frac{1-\om}{3(q^3;q^3)_\infty}\sum_{n=-\infty}^\infty(-1)^nq^{\binom{n+1}2}z^n(\om^n-\om^{2n+2}). \end{equation}
It is easily verified that \eqref{la} and \eqref{lb} agree.
\end{proof}

Let us introduce the notation
$$\mu_{a,b}(f)=\frac{(q,b^2;q)_\infty}{(qab,b/a;q)_\infty}\oint f(z)\frac{(qaz,1/az;q)_\infty}{(bz,b/z;q)_\infty}\frac{dz}{2\pi\textup i z}. $$
Then, using  Lemma \ref{wl},  the bilinear form introduced in Lemma \ref{all}
splits as in \eqref{sps}, with
\begin{align*}\langle f,g\rangle_1&=\frac{(1-\om)(1-b\om^2)}{3(1+b)}\,\mu_{\om,b}(fg), \\
\langle f,g\rangle_{-1}&=\frac{(1-\om^2)(1-b\om)}{3(1+b)}\,\mu_{\om^2,b}(fg).
\end{align*}
To proceed, we need the following result.

\begin{proposition}\label{olp}
Let
$$P_n^{(a,b;q)}(z)=
z^{-\left[\frac n2\right]}\,{}_2\phi_1\left(\begin{matrix}q^{-n},b/a\\q^{1-n}/ab\end{matrix};q,\frac{qz}{b}\right) $$
and let 
$$C_n=C_n^{(a,b;q)}=\begin{cases}1, & n \text{\emph{ even}},\\ 
a^n(b/a;q)_n/(ab;q)_n, & n \text{\emph{ odd}}.
\end{cases}$$
Then,
$P_n^{(a,b;q)}/C_n$ is a  monic Laurent polynomial of degree $n$.
 For $|b|,\,|q|<1$ we then have the orthogonality relation
\begin{equation}\label{lo}\mu_{a,b}\left(P_m^{(a,b;q)}P_n^{(a,b;q)}\right)=h_n\delta_{mn}, \end{equation}
where 
$$h_n=(-1)^n
a^{2\left[\frac n2\right]}\frac{(q,b^2,b/a;q)_n}{(ab;q)_n(abq;q)_{2[n/2]}(b/a;q)_{2[(n+1)/2]}}.
$$
Moreover, 
 \begin{align}\notag\mu_{c,b}\left(P_m^{(a,b;q)}P_n^{(a,b;q)}\right)&=
a^{\left[\frac{m}2\right]+\left[\frac{n}2\right]}
\frac{(b^2;q)_m(b^2;q)_n(qc/a;q)_{\left[\frac{m}2\right]+\left[\frac{n}2\right]}(a/c;q)_{\left[\frac{m+1}2\right]+\left[\frac{n+1}2\right]}}{(ab;q)_m(ab;q)_n(qbc;q)_{\left[\frac{m}2\right]+\left[\frac{n}2\right]}(b/c;q)_{\left[\frac{m+1}2\right]+\left[\frac{n+1}2\right]}}\\
&\label{sp}\quad\times
\,{}_4\phi_3\left(\begin{matrix}q^{-m},q^{-n},ab,b/a\\b^2,q^{-\left[\frac{m}2\right]-\left[\frac{n}2\right]}a/c,q^{1-\left[\frac{m+1}2\right]-\left[\frac{n+1}2\right]}c/a\end{matrix};q,q\right).
\end{align}
\end{proposition}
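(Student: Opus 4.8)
The plan is to reduce all three assertions to one computation, the moments $\mu_{c,b}(z^j)$ for integer $j$, and then to integrate the explicit terminating ${}_2\phi_1$ series defining $P_n^{(a,b;q)}$ monomial by monomial. The \emph{monic} claim is purely formal and I would dispose of it first: since $(q^{-n};q)_k$ terminates the series at $k=n$, the Laurent polynomial $P_n^{(a,b;q)}$ is supported on the exponents $z^{-[n/2]},\dots,z^{[(n+1)/2]}$, that is, on the first $n+1$ monomials of the sequence $1,z,z^{-1},\dots$. Its extreme coefficient is either the $k=0$ term (coefficient $1$ of $z^{-[n/2]}$, the leading monomial for $n$ even) or the $k=n$ term (leading monomial $z^{[(n+1)/2]}$ for $n$ odd); evaluating the latter by the reversal identities $(q^{-n};q)_n=(-1)^nq^{-\binom{n+1}2}(q;q)_n$ and $(q^{1-n}/ab;q)_n=(ab;q)_n/\bigl((-ab)^nq^{\binom n2}\bigr)$ shows it equals $a^n(b/a;q)_n/(ab;q)_n=C_n$, as claimed.

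The computational heart is the moment integral. After the elementary rewriting $(qcz,(cz)^{-1};q)_\infty=-\tfrac1{cz}(cz,q/(cz);q)_\infty$, the weight becomes $-\tfrac1{cz}(cz,q/(cz);q)_\infty/(bz,b/z;q)_\infty$, which is of Ramanujan type. I would evaluate $\mu_{c,b}(z^j)$ by residues, the contributing poles inside the unit circle being the zeros $z=bq^k$ of $(b/z;q)_\infty$; the sum of residues is a ${}_1\psi_1$-type series summed in closed form by Ramanujan's ${}_1\psi_1$ theorem. The normalizing constant in the definition of $\mu_{c,b}$ is exactly the one forcing $\mu_{c,b}(1)=1$, and for general $j$ one obtains a ratio of $q$-shifted factorials, with parallel but distinct expressions according to the sign of $j$.

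With the moments in hand I would substitute $P_n^{(a,b;q)}=z^{-[n/2]}\sum_k\frac{(q^{-n},b/a;q)_k}{(q^{1-n}/ab,q;q)_k}(qz/b)^k$, expand the product $P_m^{(a,b;q)}P_n^{(a,b;q)}$ as a double sum, integrate term by term against the moment formula, and resum. For \eqref{sp} the double sum collapses, after one $q$-contiguous manipulation, into the single ${}_4\phi_3$ displayed, with numerator parameters $q^{-m},q^{-n}$ coming from the two terminating factors, $b/a$ from the ${}_2\phi_1$ numerators, and $ab$ together with the floor-dependent lower parameters supplied by the moments. The diagonal relation \eqref{lo} is the case $c=a$: here the relevant inner sum is a terminating $q$-Chu--Vandermonde sum that vanishes for $m\ne n$, while the $m=n$ contribution collects to $h_n$.

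The step I expect to be the main obstacle is the \emph{parity bookkeeping}. The floors $[m/2],[(m+1)/2],[n/2],[(n+1)/2]$ make the monomial exponent $k+l-[m/2]-[n/2]$ range over both signs, so the two sign-dependent forms of $\mu_{c,b}(z^j)$ must be reconciled into the single uniform expression matching the ${}_4\phi_3$; verifying that the $q$-summation genuinely reproduces the stated denominators $(abq;q)_{2[n/2]}$ and $(b/a;q)_{2[(n+1)/2]}$, consistently across the even and odd subcases, is where the real care lies. A secondary subtlety is that \eqref{lo} is \emph{not} the naive $c=a$ specialization of \eqref{sp}: at $c=a$ the prefactor there vanishes while the ${}_4\phi_3$ acquires poles, so the diagonal case must be treated by its own direct evaluation rather than by substitution.
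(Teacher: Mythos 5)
Your overall strategy (termwise integration of the explicit series against closed-form moments coming from Ramanujan's ${}_1\psi_1$ summation) belongs to the same family as the paper's argument, and your treatment of the monicity claim matches what the paper leaves as a routine check. But there is a genuine gap at the computational heart. If you expand both factors by the defining ${}_2\phi_1$ and integrate the monomials $z^{k+l-[m/2]-[n/2]}$ against the bilateral moments $\mu_{c,b}(z^{-x})=b^x(qc/b;q)_x/(qbc;q)_x$, the resulting double sum has coefficient denominators $(q,q^{1-m}/ab;q)_k$ and $(q,q^{1-n}/ab;q)_l$, and is coupled through a \emph{ratio} of two $q$-shifted factorials of order $k+l$ (after rewriting $(A;q)_{M-k-l}$ in terms of Pochhammers in $k+l$). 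Nothing in that shape is a $q$-Chu--Vandermonde target, and ``one $q$-contiguous manipulation'' will not reduce it to the stated ${}_4\phi_3$ --- note that the parameters $ab$ and $b^2$ appearing in \eqref{sp} are absent from your raw coefficients, which already signals that a transformation is needed. What makes the collapse work in the paper is precisely a preliminary step you are missing: $P_m^{(a,b;q)}$ and $P_n^{(a,b;q)}$ are first rewritten via the two ${}_3\phi_2$ representations \cite[Eq.\ (III.6--7)]{gr}, one carrying the factor $(bz;q)_k$ and the other $(b/z;q)_l$. These $z$-Pochhammers are absorbed into the weight, since $(bz;q)_k/(bz;q)_\infty=1/(bq^kz;q)_\infty$, so for each $(k,l)$ a \emph{single fixed} Laurent coefficient is extracted by Ramanujan's summation; the coupling is then the single factor $(b^2;q)_{k+l}$, which is split by \cite[Eq.\ (II.7)]{gr}, and after reindexing the two inner sums are evaluated by \cite[Eq.\ (II.6)]{gr}. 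Without this device (or an equivalent double-sum reduction) the central identity \eqref{sp} in your proposal is asserted rather than proved.

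Your closing remark is also wrong on a point of fact, although the instinct behind it is sound: \eqref{lo} \emph{is} obtained in the paper as the $c\to a$ specialization of \eqref{sp}. The zero-times-pole interplay you flag is resolved by combining the vanishing prefactor $(a/c;q)_{[(m+1)/2]+[(n+1)/2]}$ with the denominator parameter $(q^{-[m/2]-[n/2]}a/c;q)_x$ of the ${}_4\phi_3$ into the single factor $(q^{x-[m/2]-[n/2]}a/c;q)_{m+n-2x}$, which is regular at $c=a$ and vanishes there for all $x\leq[(m+n-1)/2]$; the surviving range $[(m+n+1)/2]\leq x\leq\min(m,n)$ is empty when $m\neq n$ and reduces to the single point $x=m=n$ otherwise, yielding $h_n$. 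Your proposed substitute --- a direct diagonal evaluation at $c=a$, which is essentially Pastro's orthogonality \cite{p}, as the paper itself points out --- is a viable alternative, but you have not exhibited the summation it requires (after the bilateral moments are rewritten as Pochhammers in the inner index, the terminating sum is not obviously a plain $q$-Chu--Vandermonde), and it is extra work that the paper's careful limit argument avoids entirely.
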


Before proving Proposition \ref{olp}, we point out that \eqref{lo} is equivalent to a result of Pastro \cite{p}.
Namely, if we let $p_n(z)=z^{[n/2]}P_n^{(a,b;q)}(z)$, then $p_n$ is a monic polynomial of degree $n$. Moreover, \eqref{lo} means that 
$$\mu_{a,b}(z^{-k}p_n(z))=0,\qquad k=0,1,\dots,n-1.$$ 
It follows that
$$\oint p_m(z)\,p_n(1/z)\frac{(qaz,1/az;q)_\infty}{(bz,b/z;q)_\infty}\frac{dz}{2\pi\textup i z}=0,\qquad m\neq n $$
or, if the parameters are such that $p_n$ have real coefficients,  
$$\oint p_m(z)\,\overline{p_n(z)}\,\frac{(qaz,1/az;q)_\infty}{(bz,b/z;q)_\infty}\frac{dz}{2\pi\textup i z}=0,\qquad m\neq n. $$
It is easily seen that this orthogonal system on the unit circle is equivalent to the one introduced by Pastro.

\begin{proof}[Proof of \emph{Proposition \ref{olp}}]
It is straight-forward to check that
 $P_n^{(a,b;q)}/C_n$ is a  monic Laurent polynomial of degree $n$.
To prove \eqref{sp}, we use \cite[Eq.\ (III.6--7)]{gr} to write
\begin{align*}P_n^{(a,b;q)}(z)&=
z^{-\left[\frac n2\right]}\left(\frac ab\right)^n\frac{(b^2;q)_n}{(ab;q)_n}\,{}_3\phi_2\left(\begin{matrix}q^{-n},b/a,bz\\b^2,0\end{matrix};q,q\right)\\ 
&=
\frac{z^{\left[\frac {n+1}2\right]}}{b^n}\frac{(b^2;q)_n}{(ab;q)_n}\,{}_3\phi_2\left(\begin{matrix}q^{-n},ab,b/z\\b^2,0\end{matrix};q,q\right).
\end{align*}
These expressions also clarify the relation to Meixner--Pollaczek polynomials
\eqref{mp}. Expressing  $P_m^{(a,b;q)}$ using the first
of these identities and  $P_n^{(a,b;q)}$ using the second one gives
\begin{multline*}\mu_{c,b}\left(P_m^{(a,b;q)}P_n^{(a,b;q)}\right)
=\frac{a^m}{b^{m+n}}\frac{(q,b^2;q)_\infty(b^2;q)_m(b^2;q)_n}{(qbc,b/c;q)_\infty(ab;q)_m(ab;q)_n}\\
\times\sum_{k=0}^{m}\sum_{l=0}^n\frac{(q^{-m},b/a;q)_k}{(q,b^2;q)_k}\frac{(q^{-n},ab;q)_l}{(q,b^2;q)_l}q^{k+l}
\oint z^{\left[\frac{n+1}2\right]-\left[\frac{m}2\right]}\frac{(qcz,1/cz;q)_\infty}{(bq^kz,bq^l/z;q)_\infty}\frac{dz}{2\pi\textup i z}. \end{multline*}
By Ramanujan's summation \cite[Eq.\ (II.29)]{gr}, 
$$\frac{(q,b^2,qcz,1/cz;q)_\infty}{(qbc,b/c,bq^kz,bq^l/z;q)_\infty}
=\frac{(b^2;q)_{k+l}}{(b/c;q)_k(qbc;q)_l}\sum_{x=-\infty}^\infty
\frac{(q^{1-k}c/b;q)_x}{(q^{l+1}bc;q)_x}\,(q^kbz)^x
$$ 
in the annulus $|q^{k+l}b|<|z|<|b^{-1}|$, which is consistent with our conditions
 on the parameters. Since
only the term with $x=[m/2]-[(n+1)/2]$ contributes to the integral, we 
conclude that
\begin{multline*}\mu_{c,b}\left(P_m^{(a,b;q)}P_n^{(a,b;q)}\right)
=\frac{a^m}{b^{\left[\frac{m+1}2\right]+\left[\frac{n+1}2\right]+n}}\frac{(b^2;q)_m(b^2;q)_n(qc/b;q)_{\left[\frac{m}2\right]-\left[\frac{n+1}2\right]}}{(ab;q)_m(ab;q)_n(qbc;q)_{\left[\frac{m}2\right]-\left[\frac{n+1}2\right]}}\\
\times\sum_{k=0}^{m}\sum_{l=0}^n\frac{(b^2;q)_{k+l}}{(b^2;q)_k(b^2;q)_l}\frac{(q^{-m},b/a;q)_k}{(q,q^{-\left[\frac{m}2\right]+\left[\frac{n+1}2\right]}b/c;q)_k}\frac{(q^{-n},ab;q)_l}{(q,q^{\left[\frac{m}2\right]-\left[\frac{n+1}2\right]+1}bc;q)_l}q^{k+l}. \end{multline*}
Applying  \cite[Eq.\ (II.7)]{gr}
in the form 
$$\frac{(b^2;q)_{k+l}}{(b^2;q)_k(b^2;q)_l}=\sum_{x=0}^{\min(k,l)}\frac{(q^{-k},q^{-l};q)_x}{(q,b^2;q)_k}\left(b^2q^{k+l}\right)^x, $$
 replacing $(k,l)$ by $(k+x,l+x)$ and  changing
the order of summation gives after simplification
 \begin{align*}&\mu_{c,b}\left(P_m^{(a,b;q)}P_n^{(a,b;q)}\right)
=\frac{a^m}{b^{\left[\frac{m+1}2\right]+\left[\frac{n+1}2\right]+n}}\frac{(b^2;q)_m(b^2;q)_n(qc/b;q)_{\left[\frac{m}2\right]-\left[\frac{n+1}2\right]}}{(ab;q)_m(ab;q)_n(qbc;q)_{\left[\frac{m}2\right]-\left[\frac{n+1}2\right]}}\\
&\qquad\times\sum_{x=0}^{\min(m,n)}\frac{(q^{-m},q^{-n},ab,b/a;q)_x}{(q,b^2,q^{-\left[\frac{m}2\right]+\left[\frac{n+1}2\right]}b/c,q^{1+\left[\frac{m}2\right]-\left[\frac{n+1}2\right]}bc;q)_x}\,q^{x(x+1)}b^{2x}\\
& \qquad\times\sum_{k=0}^{m-x}\frac{(q^{x-m},q^xb/a;q)_k}{(q,q^{x-\left[\frac{m}2\right]+\left[\frac{n+1}2\right]}b/c;q)_k}\,q^k\sum_{l=0}^{n-x}\frac{(q^{x-n},q^xab;q)_l}{(q,q^{1+x+\left[\frac{m}2\right]-\left[\frac{n+1}2\right]}bc;q)_l}\,q^l
. \end{align*}
Computing the inner sums using \cite[Eq.\ (II.6)]{gr} and simplifying, we 
arrive at \eqref{sp}.

To deduce \eqref{lo}, we observe that the right-hand side of \eqref{sp}
has the form
$$\sum_{x=0}^{\min(m,n)}(q^{x-[m/2]-[n/2]}a/c)_{m+n-2x}(\dotsm),$$ 
where the missing factors are regular at $c=a$. 
When $c\rightarrow a$, the summand vanishes for
$$x\leq\min\left(\left[\frac m2\right]+\left[\frac n2\right],\left[\frac{m+1}2\right]+\left[\frac{n+1}2\right]-1\right)=\left[\frac{m+n-1}2\right].$$
 Thus, the range of summation can be reduced to
$$\left[\frac{m+n+1}2\right]\leq x\leq \min(m,n), $$
which is empty for $m\neq n$ and consists of the point $x=m=n$ otherwise.
After simplification, this gives \eqref{lo}.
\end{proof}

We can now conclude that 
\begin{multline*}\det_{0\leq m,n\leq N-1}\left(\frac{(1-\om)(1-b\om^2)}{3(1+b)}\mu_{\om,b}
\left(P_m^{(\om,b;q)}P_m^{(\om,b;q)}\right)\delta_{mn}\right.\\
\left.+\frac{(1-\om^2)(1-b\om)}{3(1+b)}\mu_{\om^2,b}
\left(P_m^{(\om,b;q)}P_n^{(\om,b;q)}\right)
\right)=\prod_{n=0}^{N-1}C_n^2h_n^{(0)}, \end{multline*}
where $h_n^{(0)}$ is as in \eqref{aln} and the remaining quantities are 
given in Lemma \ref{all} and Proposition \ref{olp}.
To simplify, we pull out the factor
$(1-\om)(1-b\om^2)/3(1+b)$ and multiply the $n$th row and  column with
$\om^{2[n/2]}(b\om ;q)_n$, for each $n$. We also write
\begin{equation}\label{xn}X_m=(-1)^m\frac{(b\om,b\om^2;q)_m}{(q b\om;q)_{2[m/2]}(b\om^2;q)_{2[(m+1)/2]}}
=\begin{cases}\displaystyle\frac{1-b\om}{1-b\om q^m}, & m \text{ even},\\[3mm]
\displaystyle-\frac{1-b\om}{1-b\om^2 q^m}, & m \text{ odd}.
\end{cases}
 \end{equation}
After simplification, we find the following $q$-analogue of Andrews's determinant.

\begin{theorem}\label{qt}
Let $\om=e^{2\pi\textup i/3}$ and let $X_m$ be as in \eqref{xn}. Then, the following determinant evaluation holds:
\begin{align*}
&
\det_{0\leq m,n\leq N-1}\Bigg(X_m{(q,b^2;q)_m}\,\delta_{mn}
-\om^2\frac{(b^2;q)_m(b^2;q)_n(q\om;q)_{\left[\frac{m}2\right]+\left[\frac{n}2\right]}(\om^2;q)_{\left[\frac{m+1}2\right]+\left[\frac{n+1}2\right]}}{(b\om^2;q)_{\left[\frac{m}2\right]+\left[\frac{n}2\right]+1}(qb\om;q)_{\left[\frac{m+1}2\right]+\left[\frac{n+1}2\right]-1}}\\
&\qquad\quad\times
\,{}_4\phi_3\left(\begin{matrix}q^{-m},q^{-n},b\om,b\om^2\\b^2,q^{-\left[\frac{m}2\right]-\left[\frac{n}2\right]}\om^2,q^{1-\left[\frac{m+1}2\right]-\left[\frac{n+1}2\right]}\om\end{matrix};q,q\right)\Bigg)\\
&\qquad=\left(\frac{3(1+b)}{(1-\om)(1-b \om^2)}\right)^N\left(-\frac{(1-b)(1-bq^2)(1-b\om^2)^2\om^2}{(1-qb\om)(1-qb\om^2)}\right)^{[N/2]}\\
&\qquad\quad\times\prod_{n=0}^{[(N-1)/2]}
\frac{\om^n(b\om;q)_{2n}^2(q^3,b^3;q^3)_n(b,b^2q;q)_{3n}}{(b^3,b^3q^3;q^3)_{2n}}\\
&\qquad\quad\times\prod_{n=0}^{[(N-2)/2]}\frac{\om^{2n}(qb\om^2;q)_{2n}^2(q^3,b^3q^3;q^3)_n(bq^3,b^2q;q)_{3n}}{(b^3q^3,b^3q^6;q^3)_{2n}}.
\end{align*}
\end{theorem}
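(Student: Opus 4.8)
The plan is to specialize the general determinant identity \eqref{gfd}---in the Laurent-polynomial incarnation justified by the discussion following Lemma \ref{all}---to the three bilinear forms assembled in this section, exactly as Theorems \ref{at}--\ref{tdc} specialized it in the polynomial setting. Concretely, I would take $\langle\cdot,\cdot\rangle_0$ as in Lemma \ref{all}, split it as in \eqref{sps} via the quintuple product identity (Lemma \ref{wl}), so that $\langle\cdot,\cdot\rangle_{\pm1}$ become the stated constant multiples of $\mu_{\om,b}$ and $\mu_{\om^2,b}$. The monic Laurent polynomials orthogonal with respect to $\langle\cdot,\cdot\rangle_1\propto\mu_{\om,b}$ are $p_n^{(1)}=P_n^{(\om,b;q)}/C_n$ by Proposition \ref{olp}. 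Feeding these into \eqref{gfd} turns the diagonal entries $h_n^{(1)}$ into the $a=\om$, $m=n$ case of the orthogonality \eqref{lo} and the off-diagonal entries $\langle p_m^{(1)},p_n^{(1)}\rangle_{-1}$ into the $a=\om$, $c=\om^2$ case of the mixed-moment formula \eqref{sp}; clearing the normalizing constants $C_n$ (which multiplies the determinant by $\prod C_n^2$) yields precisely the pre-theorem determinant identity with right-hand side $\prod_{n=0}^{N-1}C_n^2h_n^{(0)}$.

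The remaining work is disciplined bookkeeping. First I would pull the common scalar $\lambda=(1-\om)(1-b\om^2)/3(1+b)$ out of each row, producing the factor $\lambda^{-N}=(3(1+b)/(1-\om)(1-b\om^2))^N$ on the right and, using $1+\om=-\om^2$, converting the off-diagonal prefactor ratio into $-\om^2(1-b\om)/(1-b\om^2)$. Next I would multiply the $n$th row and $n$th column by $\om^{2[n/2]}(b\om;q)_n$. In the off-diagonal entry the substitutions $a=\om$, $c=\om^2$ give $qc/a=q\om$, $a/c=\om^2$, $ab=b\om$, $b/a=b\om^2$; the row/column factors cancel the spurious $(b\om;q)_m(b\om;q)_n$ in the denominator of \eqref{sp}, the leftover $(1-b\om)/(1-b\om^2)$ merges the shifted Pochhammer symbols into $(b\om^2;q)_{[m/2]+[n/2]+1}$ and $(qb\om;q)_{[(m+1)/2]+[(n+1)/2]-1}$, and all stray powers of $\om$ collapse through $\om^3=1$ (here $\om^2\cdot\om^{3([m/2]+[n/2])}=\om^2$) to leave exactly the claimed $-\om^2$ prefactor and ${}_4\phi_3$. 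On the diagonal the same operations turn the $h_m$ of \eqref{lo} into $X_m(q,b^2;q)_m$, and comparison with \eqref{xn} is a one-line check once one writes $X_m=(-1)^m(b\om,b\om^2;q)_m/[(qb\om;q)_{2[m/2]}(b\om^2;q)_{2[(m+1)/2]}]$.

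Finally I would assemble the right-hand side. The row/column scaling multiplies it by $\prod_{n=0}^{N-1}(\om^{2[n/2]}(b\om;q)_n)^2$, leaving $\prod_{n=0}^{N-1}(\om^{2[n/2]}(b\om;q)_n)^2C_n^2h_n^{(0)}$, which I would split into even- and odd-indexed parts. For $n=2k$ one has $C_{2k}=1$ and $\om^{4k}=\om^k$, recovering the first product in the theorem with $h_{2k}^{(0)}$ from \eqref{he}; for $n=2k+1$ one has $C_{2k+1}=\om^{2k+1}(b\om^2;q)_{2k+1}/(b\om;q)_{2k+1}$, and after the cancellations $\om^{8k+2}=\om^{2k+2}$ and $(b\om^2;q)_{2k+1}^2=(1-b\om^2)^2(qb\om^2;q)_{2k}^2$ the constant prefactor of $h_{2k+1}^{(0)}$ in \eqref{ho} detaches and, over the $[N/2]$ odd indices, produces the displayed $[N/2]$th power, leaving the second product. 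The only genuine difficulty is the careful tracking of the cube-root-of-unity phases and the index shifts in the $q$-Pochhammer symbols during these cancellations; there is no conceptual obstacle, since the orthogonality \eqref{lo} and the mixed moment \eqref{sp} perform all the analytic work upstream.
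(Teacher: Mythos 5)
Your proposal is correct and follows the paper's own proof essentially verbatim: the Laurent-polynomial version of \eqref{gfd}, the split of $\langle\cdot,\cdot\rangle_0$ via Lemma \ref{wl}, the Pastro-type polynomials of Proposition \ref{olp} specialized at $a=\om$, $c=\om^2$ (using \eqref{lo} for the diagonal and \eqref{sp} off the diagonal), followed by extracting $(1-\om)(1-b\om^2)/3(1+b)$ and rescaling the $n$th row and column by $\om^{2[n/2]}(b\om;q)_n$. All of your bookkeeping---the $-\om^2$ prefactor from $1+\om=-\om^2$, the merged Pochhammer symbols, the identification of the diagonal with $X_m(q,b^2;q)_m$ as in \eqref{xn}, and the even/odd split of $\prod_{n}C_n^2h_n^{(0)}$ producing the $[N/2]$th-power constant---checks out against the paper's computation.
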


Replacing $b$ by $q^{b/2}$ and letting $q\rightarrow 1$, the ${}_4\phi_3$ reduces to a summable ${}_2F_1$ and we recover Andrews's determinant evaluation. 
Incidentally, replacing  $b$ by $-q^{b/2}$ and letting $q\rightarrow 1$
gives Theorem \ref{tdc}.

 \end{document}